\documentclass[11pt]{amsart}
\usepackage{amsmath}
\usepackage{amssymb}
\usepackage{amsthm}
\usepackage{latexsym,tikz}
\usepackage{hyperref}
\usepackage{enumerate}
\usepackage[all]{xy}

\setlength{\unitlength}{1cm}
\setlength{\topmargin}{0cm}
\setlength{\textheight}{22cm}
\setlength{\oddsidemargin}{1cm}
\setlength{\textwidth}{14cm}
\setlength{\voffset}{-1cm}

\newtheorem{thm}{Theorem}[section]

\newtheorem{lem}[thm]{Lemma}
\newtheorem{prop}[thm]{Proposition}

\newtheorem{ex}[thm]{Example}

\def\Irr{\mathbf{Irr}}
\def\cG{\mathcal{G}}

\def\I{\mathfrak{I}}

\def\cS{\mathcal{S}}

\def\rZ{\mathrm{Z}}

\def\dim{\mathrm{dim}}
\def\Gal{\mathrm{Gal}}
\def\Hom{\mathrm{Hom}}
\def\im{\mathrm{im}}

\def\Ind{\mathrm{Ind}}
\def\ind{\mathrm{ind}}

\def\span{\mathrm{span}}

\def\sep{\mathrm{s}}
\def\ur{\mathrm{ur}}
\def\ab{\mathrm{ab}}
\def\val{\mathrm{val}}

\def\SO{\mathrm{SO}}

\def\PGL{\mathrm{PGL}}

\def\GL{\mathrm{GL}}

\def\Nrd{\mathrm{Nrd}}
\def\F{\mathbb{F}}

\def\bW{\mathbf{W}}
\def\R{\mathbb{R}}
\def\C{\mathbb{C}}

\def\Z{\mathbb{Z}}

\def\Q{\mathbb{Q}}
\def\SL{\mathrm{SL}}
\def\PSL{\mathrm{PSL}}
\def\im{\mathrm{im}}

\def\res{\mathrm{res}}
\def\Tr{\mathrm{Tr}}

\def\W{\mathbf{W}}
\def\I{\mathbf{I}}
\def\i{\mathbf{i}}
\def\j{\mathbf{j}}
\def\k{\mathbf{k}}

\def\gphi{{\varphi}}
\newcommand{\dv}{d_{\varpi}}
\newcommand{\mf}{\mathfrak}
\newcommand{\mc}{\mathcal}

\newcommand{\matje}[4]{\left(\begin{smallmatrix} #1 & #2 \\ 
#3 & #4 \end{smallmatrix}\right)}

\begin{document}

\title[$L$-packets and depth]{On $L$-packets and depth for $\SL_2(K)$ and its inner form}

\author[A.-M. Aubert]{Anne-Marie Aubert}
\address{Institut de Math\'ematiques de Jussieu -- Paris Rive Gauche, 
U.M.R. 7586 du C.N.R.S., U.P.M.C., 4 place Jussieu 75005 Paris, France}
\email{anne-marie.aubert@imj-prg.fr}
\author[S. Mendes]{Sergio Mendes}
\address{ISCTE - Lisbon University Institute\\    Av. das For\c{c}as Armadas\\     1649-026, Lisbon\\   Portugal}
\email{sergio.mendes@iscte.pt}
\author[R. Plymen]{Roger Plymen}
\address{School of Mathematics, Southampton University, Southampton SO17 1BJ,  England
\emph{and} School of Mathematics, Manchester University, Manchester M13 9PL, England}
\email{r.j.plymen@soton.ac.uk \quad plymen@manchester.ac.uk}
\author[M. Solleveld]{Maarten Solleveld}
\address{IMAPP, Radboud Universiteit, Heyendaalseweg 135, 6525AJ, Nijmegen, the Netherlands}
\email{m.solleveld@science.ru.nl}

\keywords{Representation theory, local field, L-packets, depth}
\thanks{Maarten Solleveld is supported by a NWO Vidi grant "A Hecke
algebra approach to the local Langlands correspondence" (nr.
639.032.528).}
\date{\today}
\maketitle

\begin{abstract}  
We consider the group $\SL_2(K)$, where $K$ is a local non-archimedean field of characteristic two.  
We prove that the depth of any irreducible representation of $\SL_2 (K)$ is larger than the 
depth of the corresponding Langlands parameter, with equality if and only if the L-parameter
is essentially tame. 

We also work out a classification of all L-packets for $\SL_2 (K)$ and for its non-split
inner form, and we provide explicit formulae for the depths of their L-parameters.
\end{abstract}

\tableofcontents

\section{Introduction}
\smallskip

\smallskip

Let $K$ be a non-archimedean local field and let $K_\sep$ be a separable closure of $K$.  A central role in the 
representation theory of reductive $K$-groups is played by the local Langlands correspondence (LLC).
It is known to exist in particular for the inner forms of the groups $\GL_n(K)$ or $\SL_n(K)$, and 
to preserve interesting arithmetic information, like local L-functions and $\epsilon$-factors. 

Another invariant that makes sense on both sides of the LLC is \emph{depth}. 
The \emph{depth} $d(\pi)$ of an irreducible smooth representation $\pi$ of a reductive $p$-adic group $\cG$ was defined 
by Moy and Prasad \cite{MoPr}  in terms of filtrations $\cG_{x,r}$ ($r \in \R_{\geq 0}$) of
its parahoric subgroups $\cG_x$.
The depth of a Langlands parameter $\phi$  is defined to be the smallest 
number $d(\phi) \geq 0$ such that $\phi$ is trivial on $\Gal (F_s/F)^r$ for all $r > d(\phi)$, where
$\Gal (K_\sep / K)^r$ be the $r$-th ramification subgroup of the absolute Galois group of $K$.
   
Let $D$ be a division algebra with centre $K$, of dimension $d^2$ over $K$.
Then $\GL_m (D)$ is an inner form of $\GL_n(K)$ with $n = dm$. There is a 
reduced norm map Nrd$: \GL_m (D) \to K^\times$ and the derived group 
$\SL_m(D) := \ker (\text{Nrd} \colon\cG \to K^\times)$ is an inner form of $\SL_n(K)$.
Every inner form of $\GL_n(K)$ or $\SL_n(K)$ is isomorphic to one of this kind. When $n=2$, the only possibilities 
for $d$ are $1$ or $2$, and so the inner forms are, up to isomorphism, $\GL_2(K)$ and $D^\times$, and $\SL_2(K)$ and $\SL_1(D)$. 

The LLC for $\GL_m(D)$ preserves the depth, that is, for every smooth irreducible representation $\pi$ of $\GL_m(D)$,
we have $d(\pi)= d(\varphi_\pi)$, where $\varphi_\pi$ corresponds to $\pi$ by the LLC \cite[Theorem~2.9]{ABPS1}.

The situation is different for $\SL_m(D)$. All the irreducible representations in a given L-packet $\Pi_\phi$ have the same depth, so the depth is an invariant of the L-packet, say $d(\Pi_\phi)$. We have $d(\Pi_\phi)=d(\varphi)$ where $\varphi$ is a lift of $\phi$ which has minimal depth among the lifts of $\phi$, and the following holds:
\begin{equation}\label{eq:D}
d(\phi) \le d(\Pi_\phi) 
\end{equation}
for any Langlands parameter $\phi$ for $\SL_m(D)$ \cite[Proposition~3.4 and Corollary~3.4]{ABPS1}.
Moreover \eqref{eq:D} is an equality if
$\phi$ is \emph{essentially tame}, that is,  if the image by $\phi$ of the wild inertia subgroup
$\mathbf P_K$ of the Weil group $\bW_K$ of $K$ lies in a maximal torus of $\PGL_n (\C)$.   

We observe that this notion of essentially tameness is consistent with the usual notion for 
Langlands parameters for $\GL_n(K)$. Indeed, any lift
$\varphi\colon \W_K \to\GL_n(\C)$ of $\phi$, is called essentially tame if its restriction to 
$\mathbf P_K$ is a direct sum of characters.
Clearly $\varphi$ is essentially tame if and only if $\varphi
(\mathbf P_K)$ lies in a maximal torus of $\GL_n (\C)$, which in turn is
equivalent to $\phi (\mathbf P_K)$ lying in a maximal torus of $\PGL_n (\C)$.

We denote by $t(\varphi)$ the \emph{torsion number} of $\varphi$, 
that is, the number of unramified characters $\chi$ of $\W_K$ such $\varphi\chi\cong\varphi$.
Then $\phi$ and $\varphi$ are essentially tame if and only if the residual characteristic $p$ of $K$ does not divide $n/t(\varphi)$ 
\cite[Appendix]{BHet}.

\smallskip 

In this article we take $K$ to be a local non-archimedean field $K$ of characteristic $2$.  In positive characteristic,  $K$ is of the form $K= \F_q((t))$,  the field of 
Laurent series with coefficients in $\F_q$, with $q=2^f$. This case is particularly
interesting because there are countably many quadratic extensions of $\F_q((t))$.   These quadratic extensions are parametrised by the cosets in $K/ \wp(K)$ where $\wp$ is the  map, familiar from Artin-Schreier theory, given by $\wp(X) = X^2 - X$.

We first show that equality holds in (\ref{eq:D}) only if $\phi$ is essentially tame ({\it i.e.,} $t(\varphi) = 2$):

\begin{thm} Let $K$ be a non-archimedean local field of characteristic $2$, and let $\pi$ be an irreducible representation of an inner form 
of $\SL_2 (K)$, with Langlands parameter $\phi$.  If $\phi$ is not essentially tame then we have
  \[
 d(\pi) > d(\phi).
 \]
\end{thm}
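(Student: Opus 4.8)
The strategy is to combine the inequality \eqref{eq:D}, which we may use as a black box, with a refined analysis of depths in the non-essentially-tame case. Since $n=2$, the only inner forms of $\SL_2(K)$ are $\SL_2(K)$ itself and $\SL_1(D)$, and the torsion number $t(\varphi)$ of any lift $\varphi$ divides $n=2$. The characterization recalled from \cite{BHet} says that $\phi$ is essentially tame if and only if $p=2$ does not divide $n/t(\varphi)=2/t(\varphi)$; since $p=2$, this fails precisely when $t(\varphi)=1$, i.e. $\phi$ is not essentially tame exactly when no nontrivial unramified twist fixes $\varphi$. So the hypothesis $\phi$ not essentially tame is equivalent to $t(\varphi)=1$, equivalently $\varphi(\mathbf P_K)$ does not lie in a maximal torus of $\GL_2(\C)$, i.e. $\varphi|_{\mathbf P_K}$ is irreducible of dimension $2$.

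The heart of the matter is then to show $d(\Pi_\phi)>d(\phi)$ whenever $t(\varphi)=1$. Recall from the excerpt that $d(\Pi_\phi)=d(\varphi)$ for a lift $\varphi$ of minimal depth, and $d(\phi)$ is the depth of $\phi$ as a projective parameter, measured by the largest jump of the ramification filtration $\Gal(K_\sep/K)^r$ on which $\phi$ is nontrivial. The key point is that when $\varphi|_{\mathbf P_K}$ is a genuine two-dimensional representation that is irreducible (not a sum of characters), it must be induced: $\varphi\cong\Ind_{\bW_L}^{\bW_K}(\xi)$ for a ramified quadratic extension $L/K$ and a character $\xi$ of $\bW_L$. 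Because $L/K$ is ramified, the ramification filtration of $\bW_L$ differs from the restricted filtration of $\bW_K$ by the Herbrand function $\psi_{L/K}$, which is \emph{strictly} expanding past the unique jump of $L/K$. I would compute $d(\varphi)=d(\Ind_{\bW_L}^{\bW_K}\xi)$ in terms of $d(\xi)$ and the conductor of $L/K$ via the conductor–discriminant formula, and compute $d(\phi)$ from the projectivization $\bar\varphi$, which kills the part of $\xi$ that factors through the norm $N_{L/K}$. The inequality $d(\varphi)>d(\bar\varphi)=d(\phi)$ should then come out of the strict convexity of $\psi_{L/K}$ together with the fact that, since $t(\varphi)=1$, $\xi$ is \emph{not} fixed by the nontrivial element of $\Gal(L/K)$ up to an unramified twist, so the wildly ramified part of $\xi$ survives in $\bar\varphi$ and genuinely contributes to $d(\phi)$, but contributes \emph{strictly more} to $d(\varphi)$ because of the index-two ramified descent. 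One must also check there is no lift of strictly smaller depth than this induced one; this uses that all lifts differ by characters of $\bW_K/\bW_K\cdot[\text{stuff}]\cong$ (a group of characters of order $2$), which are tamely ramified in residual characteristic $\neq 2$ but here $p=2$, so even the twisting characters can be wildly ramified — this is exactly why the problem is subtle in characteristic two and must be handled with care.

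The main obstacle I anticipate is the last point: controlling the depths of \emph{all} lifts $\varphi\chi$ of $\phi$, since in characteristic $2$ the relevant character group $(\mathrm{maximal\ torus})$-stabilizer quotient can contain wildly ramified characters of order $2$, so one cannot simply argue that twisting does not change depth. I would resolve this by a direct case analysis on the Artin–Schreier parametrization $K/\wp(K)$ of the quadratic characters: for each possible twisting character $\chi$ one computes $d(\chi)$ and $d(\varphi\chi)$ and checks $d(\varphi\chi)\ge d(\varphi)$ when $\varphi$ was chosen of minimal depth, so that replacing $\varphi$ by $\varphi\chi$ cannot make the inequality $d(\varphi)>d(\phi)$ fail — indeed $d(\phi)$ is twist-invariant while $\min_\chi d(\varphi\chi)$ is attained at our induced model. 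This reduces everything to the explicit depth computations for induced and twisted characters over $\F_q((t))$, which are the routine calculations I will not carry out here.
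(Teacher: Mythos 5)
There is a genuine gap at the heart of your argument: the claim that if $\varphi|_{\mathbf P_K}$ is irreducible then $\varphi$ must be induced from a (ramified) quadratic extension is false precisely in the situation of this paper. When the residual characteristic is $2$, the Weil group $\W_K$ admits \emph{primitive} irreducible two-dimensional representations, i.e.\ representations not of the form $\Ind_{\W_L}^{\W_K}(\xi)$ for any quadratic $L/K$; these are the tetrahedral and octahedral parameters, with $\phi(\W_K)\cong A_4$ or $S_4$ and $\varphi(\mathbf P_K)$ essentially a quaternion group acting irreducibly (see \cite[\S 42]{BH}). Every primitive parameter is totally ramified, hence not essentially tame, so your reduction silently discards exactly the hardest part of the theorem. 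The paper's proof devotes most of its effort to this case: it passes to the biquadratic subextension $E/L$ of ramification depth $r$, lifts the Klein four group to the quaternion group in $\SL_2(\C)$, observes that the central commutator $z=[w_3,w_4]$ lies at depth about $2r+1$ in the lower ramification filtration of $\Gal(E^+/K)$ while lying in $\ker\phi$, and then computes Artin conductors (via $\mathrm{Ad}\circ\phi$ on $\fsl_2(\C)$ for the projective parameter) to get, e.g., $d(\phi)=r/3$ versus $d(\varphi)=(5r+1)/12$ in the octahedral case. Nothing in your outline produces these estimates, and no induction-theoretic argument can, since there is no inducing field.

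Your treatment of the imprimitive case is closer to the paper's, though vaguer: rather than invoking convexity of $\psi_{L/K}$ and a "surviving wild part" heuristic, the paper identifies $\ker\phi=\ker(\xi^2)$ exactly, obtains $d(\varphi)=\bigl(d(\xi)+d(L/K)\bigr)/2$ and $d(\phi)=\bigl(d(\xi^2)+d(L/K)\bigr)/2$, and concludes from the fact that $U_L^{l}/U_L^{l+1}$ has exponent $2$ (so $d(\xi^2)<d(\xi)$) — this is where characteristic $2$ enters, not through the Herbrand function. Finally, your concern about controlling all lifts is already disposed of by the quoted result of \cite{ABPS1} that $d(\pi)=d(\varphi)$ for a lift $\varphi$ of minimal depth, so no extra case analysis over $K/\wp(K)$ is needed there; the real missing ingredient is the primitive case.
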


Let $\varphi$ be a lift of $\phi$ with minimal depth among the lifts of $\phi$. 
In the proof we distinguish the cases where $\varphi$ is imprimitive, respectively primitive.

An irreducible Langlands parameter $\varphi\colon \W_K\to\GL_2(\C)$ is called \emph{imprimitive} if there exists a separable quadratic extension $L$ of $K$
and a character $\xi$ of $L^\times$ such that $\varphi\simeq\ind_{\W_L}^{\W_K}(\xi)$. Then the depth of $\varphi$ and $\phi$ may be expressed in terms of that of $\xi$ and $\xi^2$, respectively, as
\[
d(\varphi) = \big( d(\xi) + d(L/K) \big) / 2\quad\text{and}\quad d(\phi) = \big( d(\xi^2) + d(L/K) \big) / 2,  
\]
where $\mathfrak p_K^{1 + d(L/K)}$ is the relative discriminant of $L/K$.
Let $\mathfrak{T}(\varphi)$ be the group of characters $\chi$ of $\W_K$ such that $\chi \otimes \varphi \simeq \varphi$. 
As in \cite[41.4]{BH}, we call $\varphi$ \emph{totally ramified} if $\mathfrak{T}(\phi)$ does not contain any unramified character. 
If $\varphi$ is not essentially tame, then it is totally ramified. We check that if this case we have $d(\xi)>d(\xi^2)$, and hence 
$d(\Pi_\phi) > d(\phi)$. 

\smallskip

We obtain in Proposition \ref{prop:3.2} the following characterization of $L$-packets for $\SL_2 (K)$ or 
$\SL_1 (D)$: an $L$-packet is a minimal set of irreducible representations from which a stable 
distribution can be constructed.

Next we give the explicit classification of the L-packets for both $\SL_2(K)$ and $\SL_1(D)$.

In particular, to each biquadratic extension $L/K$, there is attached a Langlands parameter
$\phi = \phi_{L/K}$, and an $L$-packet $\Pi_{\phi}$ of cardinality $4$.   
The  depth of the parameter $\phi_{L/K}$ depends on the extension $L/K$.   
More precisely, the  numbers $d(\phi)$ depend on the breaks in the upper ramification filtration 
of the Galois group $\Gal(L/K) = \Z/2\Z \times \Z/2\Z$. Let $D$ be a central division algebra of 
dimension $4$ over $K$.   The parameter $\phi$ is relevant for the inner form $\SL_1(D)$, 
which admits singleton $L$-packets.

\begin{thm} Let $L/K$ be a biquadratic extension, let $\phi$ be the Langlands parameter $\phi_{L/K}$.   
If the highest break in the upper ramification of the
Galois group $\Gal(L/K)$ is $t$ then we have $d(\phi) = t$.   For every $\pi \in
\Pi_{\phi}(\SL_2(K)) \cup \Pi_{\phi}(\SL_1(D))$  these integers provide lower bounds:
\[
d(\pi) \geq d(\phi).
\]
Depending on the extension $L/K$, all the odd numbers $1,3,5,7,\ldots$ are achieved as such breaks.
\end{thm}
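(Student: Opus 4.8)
The plan is to establish the three assertions of the theorem — that $d(\phi)=t$, that $d(\pi)\ge d(\phi)$, and that every positive odd integer occurs as such a break — more or less independently.

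For the first assertion I would unwind the definition of the depth of $\phi=\phi_{L/K}$. By construction this parameter is the composite of the quotient map $\W_K\twoheadrightarrow\Gal(L/K)\cong\Z/2\Z\times\Z/2\Z$ with the embedding of the Klein four-group into $\PGL_2(\C)$, so it is injective on $\Gal(L/K)$. Hence, for $r>0$, $\phi$ kills $\Gal(K_\sep/K)^r$ exactly when the image of $\Gal(K_\sep/K)^r$ in $\Gal(L/K)$ is trivial; by compatibility of the upper-numbering ramification filtration with passage to quotients (Herbrand) that image equals $\Gal(L/K)^{(r)}$, which vanishes precisely for $r$ strictly above the highest break $t$ of $\Gal(L/K)$. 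The definition of depth then gives $d(\phi)=t$ at once. Here $t$ is an integer by Hasse--Arf, and it is a positive odd integer: $\Gal(L/K)$ being non-cyclic forces $L/K$ to be ramified, while $t$ is the largest of the three breaks of the quadratic subextensions of $L$, each of which (by Artin--Schreier theory in characteristic $2$) is either $0$ or a positive odd integer.

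For the second assertion I would simply invoke \eqref{eq:D}: all members of $\Pi_\phi$, whether for $\SL_2(K)$ or for $\SL_1(D)$, share the single depth $d(\Pi_\phi)$, and $d(\Pi_\phi)\ge d(\phi)=t$. I would also remark that, since a maximal torus of $\PGL_2(\C)$ is isomorphic to $\C^\times$ and its finite subgroups are cyclic whereas the Klein four-group is not, $\phi$ is not essentially tame, so the first theorem of this introduction in fact yields the strict inequality $d(\pi)>d(\phi)$.

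The real substance is the third assertion. I would first recall that ramified quadratic extensions of $K$ realise exactly the positive odd integers as their ramification break: writing such an extension as $K(\alpha)$ with $\alpha^2+\alpha=a$ and $a$ of minimal pole order in $a+\wp(K)$ — necessarily odd, since $\wp(K)$ contains no element of odd pole order — the break equals that pole order, and $a=\varpi^{-r}$ produces break $r$ for every odd $r\ge 1$. To realise the highest break $r$ for a biquadratic extension I would take $L_1/K$ with break $r$, let $L_2/K$ be the unramified quadratic extension, and set $L=L_1L_2$, so that $\Gal(L/K)\cong\Z/2\Z\times\Z/2\Z$ with intermediate quadratic fields $L_1,L_2,L_3$. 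A short computation finishes the job: $L/K$ is ramified, so its inertia subgroup has order $2$ and equals $\Gal(L/L_2)$; this subgroup meets $\Gal(L/L_1)$ and $\Gal(L/L_3)$ trivially, hence maps isomorphically onto $\Gal(L_1/K)$ and onto $\Gal(L_3/K)$, and reading off the filtration via Herbrand compatibility gives $\Gal(L/K)^{(u)}=\Gal(L/L_2)$ for $0<u\le r$ and $\Gal(L/K)^{(u)}=1$ for $u>r$. Thus the highest break of $\Gal(L/K)$ is $r$, and by the first part $d(\phi_{L/K})=r$. The only delicate point in the whole argument is this last bookkeeping with ramification filtrations, combined with the explicit Artin--Schreier classification of quadratic extensions in characteristic $2$; everything else is the definition of depth together with a citation of \eqref{eq:D}.
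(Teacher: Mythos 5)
Your argument for the two main assertions is correct, and for $d(\phi)=t$ and for $d(\pi)\geq d(\phi)$ it is essentially the paper's own proof: Theorem \ref{ddd} also deduces $d(\phi)=t$ from the compatibility of the upper-numbering filtration with the surjection $\Gal(K_\sep/K)\twoheadrightarrow\Gal(L/K)$ (packaged there in a commutative diagram of inertia groups) together with the injectivity of $\phi$ on $\Gal(L/K)$, and the lower bound is likewise a citation of \eqref{eq:D}, i.e.\ of Theorem \ref{thm:3.1}. Where you genuinely differ is the realization of all odd breaks: you construct $L$ as the compositum of the unramified quadratic extension with a ramified Artin--Schreier extension $K(\wp^{-1}(\varpi^{-r}))$ and read off the filtration of the Klein group by hand via Herbrand, whereas the paper classifies the break patterns of \emph{all} biquadratic extensions through the orthogonality relation between the ramification filtration of $\Gal(K_2/K)$ and the filtration $(V_i)$ of $K/\wp(K)$ (Appendix \ref{par:ram}, Cases 1, 2.1, 2.2); your construction is exactly Case 1. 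Your route is more elementary and fully suffices for the existence statement; the paper's route buys the finer information recorded in Theorem \ref{ddd}, namely which break patterns can occur (e.g.\ that in Case 2.2 the value $1$ is excluded). A small slip: the single upper break of the unramified quadratic extension is at $-1$, not $0$; this does not affect your conclusion.

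One side remark you make is wrong and should be removed: the claim that $\phi_{L/K}$ is never essentially tame because the Klein four-group is not cyclic, hence that $d(\pi)>d(\phi)$ always. Essential tameness concerns $\phi(\mathbf{P}_K)$, the image of the \emph{wild inertia} subgroup, not $\phi(\W_K)$. In Case 1 --- precisely the extensions you use for the achievability part, those containing the unramified quadratic extension --- the image of $\mathbf{P}_K$ is the inertia subgroup of $\Gal(L/K)$, which has order two and, being generated by a semisimple element, lies in a maximal torus of $\PGL_2(\C)$; equivalently, $\mathfrak{T}(\varphi)$ contains the unramified quadratic character, so the minimal-depth lift $\varphi$ is not totally ramified and is essentially tame, with $t(\varphi)=2$. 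By Theorem \ref{thm:3.1} one then has equality $d(\pi)=d(\phi)$, not strict inequality; the strict inequality holds only in the totally ramified Cases 2.1 and 2.2. Since the theorem asserts only $d(\pi)\geq d(\phi)$, this error does not invalidate your proof, but as stated the remark contradicts the equality criterion of the paper.
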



  This contrasts strikingly with the case of $\SL_2(\Q_p)$ with $p>2$.  
  Here there is a unique biquadratic extension $L/K$, and a unique tamely ramified discrete parameter 
  $\phi : \Gal(L/K) \to \SO_3(\R)$ of depth zero.

  Let $E/K$ be the quadratic extension given by
  \[
  E = K(\wp^{-1}(\varpi^{-2n-1}))
  \]
  with $\varpi$ a uniformizer and $n = 0,1,2,3,\ldots$   and let $\phi_E$ be the associated $L$-parameter.   
  We prove in Subsection \ref{par:four} that the depth of $\phi_E$ is given by
  \[
  d(\phi_E) = 2n+1.
  \]
For the $L$-packets considered in this article, the depths $d(\pi)$ can be arbitrarily large.

We have included an Appendix on aspects of the Artin-Schreier theory.   
This Appendix goes a little further than the exposition in 
\cite[p.146--151]{FV} and the article of Dalawat \cite{Da}.   
We have the occasion to refer to the Appendix at several points in our article.

We thank Chandan Dalawat for a valuable exchange of emails and for bringing the reference \cite{Da}
to our attention.

\section{Depth of $L$-parameters}
\label{sec:depth}

The field $K$ possesses a central division algebra $D$ of dimension $4$ and, up to isomorphism, only one.  
The group $D^\times$ is locally profinite and is compact modulo its centre $K^\times$, see \cite[p.325]{BH}. 
Let $\Nrd$ denote the reduced norm on $D^\times$.  Define
\[
\SL_1(D) = \{x \in D^\times : \Nrd (x) = 1\}.
\]
Then $\SL_1(D)$ is an inner form of $\SL_2(K)$.
The articles \cite{HiSa,ABPS2} finalize the local Langlands correspondence for any inner form of 
$\SL_n$ over all local fields.\\

\emph{Depth of an L-parameter for} $\GL_2(K)$.  
Let $\W_K$ denote the Weil group of $K$, and let $\Phi (\GL_2(K))$ be the set of 
L-parameters $\gphi\colon \W_K\times \SL_2 (\C)\to\GL_2 (\C)$ for inner forms of $\GL_2(K)$. 
Let $t$ be a real number, $t\geq 0$, let $\Gal (K_\sep/K)^t$ be the $t$-th ramification subgroup 
of the absolute Galois group of $K$. We define
\begin{equation} \label{eqn:Phit}
\Phi_t(\GL_2(K)) := \{ \gphi \in \Phi (\GL_2(K)) \,:\, \Gal(K_\sep/K)^t \subset \ker (\gphi) \} .
\end{equation}
Notice that $\Phi_{t'}(\GL_2(K)) \subset \Phi_t (\GL_2(K))$, if $t' \leq t$.
It is known that the set of $t$'s at which Gal$(F_s / F)^t$ breaks consists of
rational numbers and is discrete \cite[Chap. IV, \S 3]{Ser}. In particular there exists 
a unique rational number $d(\gphi)$, called the \emph{depth} of $\gphi$, such that
\begin{equation}
\gphi \notin \Phi_{d(\gphi)}(\GL_2(K)) \quad \text{and} \quad
\gphi \in \Phi_{t}(\GL_2(K)) \text{ for any } t > d(\gphi) .
\end{equation}

\emph{Depth of an L-parameter for} $\SL_2(K)$.   
The depth of an L-parameter $\phi\colon\W_K \times \SL_2 (\C)\to\PGL_2 (\C)$ for an inner form of $\SL_2(K)$
is defined as:
\begin{equation} \label{eqn: depthSL}
d(\phi) = \inf \{ t \in \R_{\geq 0} \mid \Gal (K_\sep/K)^{t+} \subset \ker \phi \}, 
\end{equation}
where 
\[
\Gal (K_\sep /K)^{t+}:= \bigcap_{r > t} G^{r}.
\] 
Each projective representation $\phi \colon \W_K \to \PGL_2(\C)$ lifts to a Galois representation
\[
\gphi \colon \W_K \to \GL_2(\C).
\] 
For any such lift $\varphi$ of $\phi$ we have $\ker (\varphi) \subset \ker \phi$, so 
\begin{equation} \label{eqn: dlifts}
d(\gphi) \geq d(\phi) .
\end{equation}

Let $\varphi\colon \W_K\to \GL_2 (\C)$ be a $2$-dimensional irreducible representation of $\W_K$, 
and let $\mathfrak{T}(\varphi)$ be 
the group of characters $\chi$ of $\W_K$ such that $\chi \otimes \varphi \simeq \varphi$. Then $\varphi$ is 
primitive if $\mathfrak{T}(\varphi)=\{1\}$, \emph{simply imprimitive} if $\mathfrak{T}(\varphi)$ 
has order $2$, and
\emph{triply imprimitive} if $\mathfrak{T}(\varphi)$ has order $4$, as in \cite[41.3]{BH}.     
Comparing determinants, we see that every nontrivial element of $\mathfrak{T}(\varphi)$ has order 2.

As in \cite[41.4]{BH}, we call $\phi$ and $\varphi$ \emph{unramified} if $\mathfrak{T}(\varphi) \setminus \{1\}$
contains an unramified character, and \emph{totally ramified} if $\mathfrak{T}(\varphi) \setminus \{1\}$ 
does not contain any unramified character. By definition, a primitive representation is totally ramified.
 Thus every imprimitive irreducible representation of dimension $2$ of $\W_K$ which is not totally 
ramified is essentially tame. 

Let $\phi\colon \W_K\times\SL_2(\C)\to\PGL_2(\C)$ with trivial restriction to $\SL_2(\C)$, and such that 
$\varphi$ is a lift of $\phi$. If $\varphi$ is essentially tame and has minimal depth among the lifts of 
$\phi$, then we have $d(\phi)=d(\varphi)$ \cite[Theorem 3.8]{ABPS1}.
Thus we are reduced to computing the depths of the projective representations of $\W_K$ which lift to 
 totally ramified representations. 

\smallskip

We recall how the depth of an irreducible representation
$(\varphi,V)$ of $\W_K$ can be computed. Put $E = (K_\sep)^{\ker \varphi}$, so that $\phi$ factors 
through Gal$(E/K)$. Let $g_j$ be the order of the ramification subgroup $\Gal(E/K)_j$ 
(in the lower numbering). The Artin conductor $a(\varphi) = a(V)$ is given by
\begin{equation}\label{eq:A1}
a(\varphi) = g_0^{-1} \sum_{j \geq 0} g_j \, \dim \big( V / V^{\Gal (E/K)_j} \big) \in \Z_{\geq 0}.
\end{equation}
Since $(\varphi,V)$ is irreducible and $\Gal(E/K)_j$ is normal in $\Gal (E/K)$, 
$V^{\Gal (E/K)_j} = 0$ whenever $g_j > 1$. Thus \eqref{eq:A1} simplifies to
the formula \cite[(1)]{GrRe}:
\begin{equation}\label{eq:A2}
a(\varphi) = \frac{\dim V}{g_0} \sum_{j \geq 0 : g_j > 1} g_j =
\dim V + \frac{\dim V}{g_0} \sum_{j \geq 1 : g_j > 1} g_j 
\end{equation}
It was shown in \cite[Lemma 4.1]{ABPS2} that
\begin{equation} \label{eqn:depth}
d(\varphi):=\begin{cases}
0 & \text{if } \mathbf{I}_F\subset\ker(\phi), \\
\frac{\displaystyle a (\varphi)}{\displaystyle \dim V}-1 & \text{otherwise.}
\end{cases}
\end{equation}
Let $\varphi \colon \W_K \to \GL_2 (\C)$ be a totally ramified irreducible 
representation. Let $\phi \colon \W_K \to \PGL_2 (\C)$ be its projection. 
We will show that $d(\varphi) > d(\phi)$. To this end we may and will 
assume that $\varphi$ has minimal depth among the lifts of $\phi$.

\begin{thm} Let $\varphi$ be an irreducible totally ramified representation 
$\W_K \to \GL_2(\C)$, let $\phi:  \W_K \to \PGL_2(\C)$ be its projection.   Then we have 
\[
d(\varphi) > d(\phi).
\]
\end{thm}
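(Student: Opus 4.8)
The plan is to reduce the statement to a comparison of the depth of a ramified character $\xi$ of $L^\times$ with the depth of its square, for a suitable quadratic extension $L/K$. Since $\varphi$ is totally ramified, $\mathfrak{T}(\varphi)$ contains a nontrivial element, which is necessarily a quadratic character $\eta$ of $\W_K$; let $L = K_\sep^{\ker\eta}$ be the corresponding separable quadratic extension. By the standard theory (as in \cite[Ch.~8]{BH}), a $2$-dimensional irreducible $\varphi$ with $\eta\in\mathfrak{T}(\varphi)$ is induced: $\varphi\simeq\ind_{\W_L}^{\W_K}\xi$ for some character $\xi$ of $\W_L\cong L^\times$ (if $\varphi$ is primitive this applies to any of the three quadratic subextensions cut out by $\mathfrak{T}(\varphi)$; if $\varphi$ is simply imprimitive, to the unique one). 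Then $\phi = \mathrm{proj}\circ\varphi$ is, on $\W_L$, given by $\xi/\xi^{\sigma} = \xi^{1-\sigma}$ composed with the induction, and comparing with the recipe in the excerpt we have
\[
d(\varphi) = \frac{d(\xi) + d(L/K)}{2}, \qquad d(\phi) = \frac{d(\xi^2) + d(L/K)}{2},
\]
where $\mathfrak p_K^{1+d(L/K)}$ is the relative discriminant. (The second formula uses that $\xi^{1-\sigma}$ and $\xi^2$ have the same restriction to the wild inertia of $L$, since $\sigma$ acts trivially there up to the tame part — this is exactly where char $K = 2$ enters, as $\xi^{1-\sigma}$ and $\xi^{1+\sigma} = \xi\cdot\xi^\sigma$ differ by $\xi^{2\sigma}$ whose depth is governed by $\xi^2$.) So the theorem is equivalent to the inequality $d(\xi) > d(\xi^2)$.

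The heart of the matter is therefore: \emph{if $\xi$ is a character of $L^\times$ such that $\ind_{\W_L}^{\W_K}\xi$ is totally ramified, then $d(\xi) > d(\xi^2)$}. First I would observe that $d(\xi^2) \le d(\xi)$ always, since squaring can only lose depth (if $\xi$ is trivial on the $r$-th unit filtration subgroup $U_L^r$ then so is $\xi^2$). The point is that equality cannot hold. In characteristic $2$, squaring on $U_L^{r}/U_L^{r+1}\cong (\text{residue field},+)$ is the Frobenius-type map $x\mapsto x^2$ on an $\F_q$-vector space, which is \emph{injective}; the loss of depth instead comes from the multiplicative structure, specifically from the fact that the square map $U_L \to U_L$ carries $U_L^r$ into $U_L^{2r}$ for small $r$ (wild part) — more precisely, for $x\in U_L^r$ with $r$ less than the absolute ramification bound, $(1+x)^2 = 1 + x^2$ since $2x = 0$ in characteristic $2$... but $K$ has characteristic $2$, so in fact $(1+x)^2 = 1+x^2$ exactly, and $x^2 \in U_L^{2r}$. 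Hence $\xi^2$ is trivial on $U_L^{\lceil (d(\xi))/2\rceil + 1}$-type subgroups once $\xi$ has any wild depth, giving a strict drop. I would make this precise by writing $d(\xi) = d$, noting $\xi$ is nontrivial on $U_L^d$ but trivial on $U_L^{d+}$, and checking that $\xi^2$ is then trivial on $U_L^{\lfloor d/2\rfloor + 1}$, so $d(\xi^2) \le \lfloor d/2\rfloor < d$; the total ramification hypothesis guarantees $d \ge 1$ (the conductor exponent is at least $2$ since $\varphi$ is not of depth zero — a depth-zero $\varphi$ would be essentially tame, contradicting total ramification of a non-tame parameter, and for the tame totally ramified case one checks directly).

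The main obstacle I anticipate is the bookkeeping between the three numbering conventions: the Moy–Prasad/ramification filtration depth of $\phi$ as a Galois parameter, the Artin-conductor formula \eqref{eq:A2}, and the unit-filtration depth $d(\xi)$ of the character. One must be careful that passing from $\W_K$ to $\W_L$ rescales breaks by the ramification index $e(L/K) = 2$ (Herbrand's $\psi$ function), which is exactly why the factor $1/2$ and the term $d(L/K)$ appear in both displayed formulas; getting the discriminant contribution to cancel correctly in $d(\varphi) - d(\phi) = (d(\xi) - d(\xi^2))/2 > 0$ is the delicate computation. A secondary subtlety is the primitive case: there $\mathfrak{T}(\varphi)$ has order $4$, cutting out a biquadratic $L/K$, and one must argue that for \emph{every} quadratic subextension the induced description holds and yields the same conclusion — but since we only need \emph{one} working choice of $L$ and $\xi$, this reduces to the simply-imprimitive analysis once a quadratic subextension over which $\varphi$ is induced is fixed. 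Everything else — the inequality $d(\xi) > d(\xi^2)$ in characteristic two — is then the clean arithmetic input isolated above.
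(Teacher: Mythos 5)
There is a genuine gap: your reduction to an induced model $\varphi\simeq\ind_{\W_L}^{\W_K}\xi$ does not cover the primitive case, and that case is not a ``secondary subtlety'' but the bulk of the theorem. You assert that total ramification forces $\mathfrak{T}(\varphi)$ to contain a nontrivial (necessarily quadratic) character $\eta$; this is false. With the conventions of the paper (following Bushnell--Henniart \S 41.3--41.4), ``totally ramified'' only means that $\mathfrak{T}(\varphi)\setminus\{1\}$ contains no \emph{unramified} character, and a \emph{primitive} $\varphi$ has $\mathfrak{T}(\varphi)=\{1\}$ and is totally ramified by definition. Primitive two-dimensional representations are precisely those that are \emph{not} induced from any quadratic extension, and they do occur when the residual characteristic is $2$ (the tetrahedral and octahedral types, $\phi(\W_K)\cong A_4$ or $S_4$). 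Your later remark that ``the primitive case'' has $|\mathfrak{T}(\varphi)|=4$ cutting out a biquadratic $L/K$ confuses \emph{primitive} with \emph{triply imprimitive}; in the triply imprimitive case your argument applies, but in the genuinely primitive case there is no character $\xi$ of any $\W_L$ to work with, so the inequality $d(\xi)>d(\xi^2)$ never gets off the ground. The paper's proof of exactly this part proceeds quite differently: it uses the unique subfield $L$ with $E/L$ wildly ramified biquadratic of ramification depth $r$, lifts $D_2\subset\PGL_2(\C)$ to the quaternion group in $\SL_2(\C)$, shows via the commutator $z=[w_3,w_4]$ and Serre's Proposition IV.2.10 that $z\in\Gal(E^+/L)_{2r+1}$, and then computes Artin conductors to get $d(\phi)=r/3$ versus $d(\varphi)=(5r+1)/12$ (octahedral and totally ramified tetrahedral) and $d(\phi)=r$ versus $d(\varphi)=(5r+1)/4$ (unramified tetrahedral). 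None of this is recoverable from your induced-character framework.

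For the imprimitive case your route essentially coincides with the paper's: the formulas $d(\varphi)=\bigl(d(\xi)+d(L/K)\bigr)/2$ and $d(\phi)=\bigl(d(\xi^2)+d(L/K)\bigr)/2$ are exactly \eqref{eq:ddd} and \eqref{eq:ddd2}, though the second needs the kernel identification $\ker\phi=\ker(\xi^2)$ (the paper argues this via $\matje{-1}{0}{0}{-1}\in\varphi(\W_L)$), not merely the heuristic about $\xi^{1-\sigma}$ versus $\xi^2$ on wild inertia. Your characteristic-$2$ argument that squaring sends $U_L^r$ into $U_L^{2r}$, giving $d(\xi^2)\le\lfloor d(\xi)/2\rfloor<d(\xi)$, is correct and in fact slightly sharper than the paper's exponent-$2$ argument, provided you justify $d(\xi)\ge 1$; the paper does this by taking $\varphi$ of minimal depth among the lifts and quoting $d(\xi)>d(L/K)>0$ from \cite[\S 41.4]{BH}, whereas your parenthetical justification as written is not a proof. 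So: the imprimitive half is essentially right (modulo these two points to tighten), but the primitive half of the theorem is missing entirely.
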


\begin{proof} \emph{Primitive representations}.  Let $\varphi$ be primitive.
Put $E = K_\sep^{\ker \phi}$ and $E^+ = K_\sep^{\ker \varphi}$. By \cite[\S 42.3]{BH}
there exists a unique intermediate field $K \subset L \subset E$ such that
$E/L$ is a wildly ramified biquadratic extension. Then $\phi (\Gal (E/L))$
is a subgroup of $\PGL_2 (\C)$ isomorphic to the Klein four group. Up to conjugacy 
$\PGL_2 (\C)$ has only one such subgroup. After a suitable change of basis,
we may assume that it is
\begin{equation}\label{eq:D2}
D_2 := \big\{ \matje{1}{0}{0}{1}, \matje{0}{i}{i}{0}, \matje{-i}{0}{0}{i}, 
\matje{0}{1}{-1}{0} \big\} \subset \PGL_2 (\C) .
\end{equation}
The three subextensions of $E/L$ are conjugate under Gal$(E/K)$ because 
the conjugation action of $A_4$ on its normal subgroup $V_4$ of order four
is transitive on the nontrivial elements of $V_4$. Hence there is a unique
$r \in \Z$ such that Gal$(E/L)_r = \Gal(E/L)$ and Gal$(E/L)_{r+1} = \{1\}$.
In section \ref{par:ram} we will see that $r$ is odd.
We call this $r$ the \emph{ramification depth} of $E/L$.

The nontrivial elements of Gal$(E/L)$ are the deepest elements of Gal$(E/K)$
outside the kernel of $\phi$, and therefore the depth of $\phi$ can be expressed
in terms of~$r$.

Let us compare this to what happens for the lift $\varphi$ of $\phi$. Since
$\SL_2 (\C) \to \PGL_2 (\C)$ is a surjection with kernel of order 2, the
preimage of $\phi (\W_K)$ in $\SL_2 (\C)$ has order $2 |\phi (\W_K)|$. The matrices
in \eqref{eq:D2} do not yet form a group in $\GL_2 (\C)$, for that we really need 
the nontrivial element of $\ker (\SL_2 (\C) \to \PGL_2 (\C))$. In other words,
$\SL_2 (\C)$ contains a unique subgroup of order $2 [E:K]$ which projects onto
$\phi (\W_K)$. As $\varphi$ has minimal depth among the lifts of $\phi$,
$\varphi (\W_K)$ is precisely this subgroup. Thus $[E^+:E] = 2$
and $\Gal(E^+/K)$ is a nontrivial index two central extension of $\Gal(E/K)$.
In particular $\Gal (E^+/L)$ is isomorphic to the quaternion group of order eight.

Choose a subset $\{w_1 = 1,w_2,w_3,w_4\} \subset \Gal (E^+ / L)$ which projects 
onto $\Gal (E/L)$. We may assume that the $\varphi (w_i)$ are ordered as in 
\eqref{eq:D2}. As $\ker (\GL_2 (\C) \to \PGL_2 (\C))$ is central,
\[
[\varphi (w_3),\varphi (w_4)] = [\matje{-i}{0}{0}{i}, 
\matje{0}{1}{-1}{0}] = \matje{-1}{0}{0}{-1} \in \GL_2 (\C) .
\]
Write 
\begin{equation} \label{eqn:z}
z = [w_3,w_4] \in \Gal (E^+ / L),
\end{equation} so that $\varphi (z) = \matje{-1}{0}{0}{-1}$.
It follows from the definition of $r$ and the condition on $\varphi$ that 
\[
\Gal (E^+/L)_r = \Gal(E^+/L) \text{ and } \Gal(E^+/L)_{r+1} = \Gal(E^+/E).
\]
By \cite[Proposition IV.2.10]{Ser} $z \in \Gal (E^+/L)_{2r+1}$.
Now $z \notin \ker (\varphi)$ and it lies deeper in Gal$(E^+/K)$ than 
$w_2$, $w_3$ and $w_4$. On the other hand, $z$ does lie in the kernel of $\phi$,
which explains why $\varphi$ has larger depth than $\phi$.

In the sequel of this section, we assume that the depth of the element $z$ defined 
in \eqref{eqn:z} is exactly $2r+1$. This is allowed because, in the above setting, 
it constitutes the worst possible case for the theorem.\\[2mm]
 

\emph{Octahedral representations}.  Let $\varphi$ be octahedral, that is, it is primitive and $\phi (\W_K) \cong S_4$.
Let Ad denote the adjoint representation of $\PGL_2 (\C)$ on 
$\mf{sl}_2 (\C) = \text{Lie}(\PGL_2 (\C))$. Then Ad$\circ 
\phi$ is an irreducible 3-dimensional representation of $\W_K$. Since
$\PGL_2 (\C)$ is the adjoint group of $\mf{sl}_2 (\C)$, Ad$\circ \phi$ has
the same kernel and hence the same depth as $\phi$. 

By \cite[Theorem 42.2]{BH} $L/K$ is Galois with automorphism group $S_3$
and residue degree 2. Thus Ad$(\phi (\I_K)) \subset \mathrm{Ad}
(\phi (\W_K))$ is a normal subgroup of index two, isomorphic to $A_4$.
As $L/K$ has tame ramification index 3, the image of the wild
inertia subgroup $\mathbf P_K$ under Ad$\circ \phi$ equals the image
of Gal$(E/L)$. By our convention \eqref{eq:D2} it is Ad$(D_2)$. By the 
definition of $r$ as the ramification depth of $E/L$, we have 
\[
g_0 = 12 ,\, g_1 = \dots = g_r = 4 \text{ and } g_{r+1} = 1
\]
With the formula \eqref{eq:A2} we find 
\[
a(\text{Ad} \circ \phi) = \frac{3}{12} (12 + r \cdot 4) = 3 + r, 
\]
and from \eqref{eqn:depth} we conclude that 
\[
d(\phi) = d(\text{Ad} \circ \phi) = r/3.
\]
On the other hand, $\varphi$ is an irreducible two-dimensional representation
of $\W_K$, and we must base our calculations on the Galois group of $E^+ / K$.
The numbers
\[
g_j = |\Gal (E^+/K)_j| = |\varphi (\Gal(E^+/K)_j)|
\]
can be computed from those for $\phi$ by means of the twofold covering
$\varphi (\W_K) \to \phi (\W_K)$. We find
\[
g_0 = 24, g_1 = \cdots = g_r = 8 \text{ and } g_{r+1} = \cdots = g_{2r+1} = 2.
\]
Assuming that the depth of $z$ is precisely $2r+1$ (see above), we can also
say that $g_{2r+2} = 1$. Then \eqref{eq:A2} gives
\[
a(\varphi) = \frac{2}{24} (24 + r \cdot 8 + (r+1) \cdot 2 ) = 2 + \frac{5 r + 1}{6} .
\]
Now \eqref{eqn:depth} says that 
\[
d(\varphi) = (5 r + 1) / 12 .
\] 
We note that this is strictly larger than $d(\phi) = r/3$.
(As $a(\phi) \in \Z_{\geq 0}$, we must have $r - 1 \in 6 \Z$. This means that
above not all biquadratic extensions can occur.)\\[2mm]

\emph{Tetrahedral representations}.   
Let $\varphi$ be tetrahedral, that is, it is primitive and $\phi (\W_K) \cong A_4$.
By \cite[Theorem 42.2]{BH} $L / K$ is a cubic Galois extension. It is of prime
order, so either it is unramified or it is totally ramified.

First we consider the case that $L/K$ ramifies totally. Then $\I_K$ surjects
onto Gal$(E/K)$, so $\varphi (\I_K) = \varphi (\W_K)$. This means that
within $\I_K$ everything is similar to octahedral representations. 
The same calculations as above show that
\[
d(\phi) = r/3 < d(\varphi) = (5r+1) / 12 .
\]
Now we look at the case where $L/K$ is unramified. Then 
\[
\phi (\I_K) = \phi (\Gal (E/K)) = D_2.
\]
To compute the depth, we replace $\phi$ by the 3-dimensional representation
Ad$\circ \phi$ of $\W_K$ on $\mathfrak{sl}_2 (\C)$. With $r$ as before, 
$g_0 = \cdots = g_r = 4$ and $g_{r+1} = 1$. With \eqref{eq:A2} and
\eqref{eqn:depth} we calculate
\[
\begin{aligned}
& a(\mathrm{Ad} \circ \phi)  = \frac{3}{4} ((r + 1) \cdot 4) = 3 (r+1) , \\
& d(\phi) = d(\mathrm{Ad} \circ \phi) = \frac{3 (r+1)}{3} - 1 = r .
\end{aligned}
\]
Like in the octahedral case, the numbers $\Gal (E^+/K)_j$ for $\varphi$ are related
to those for $\phi$ via the twofold covering $\SL_2 (\C) \to \PGL_2 (\C)$.
We find 
\[
g_0 = \cdots = g_r = 8 \text{ and } g_{r+1} = \cdots = g_{2r+1} = 2.
\]
Moreover $g_{2r+2} = 1$ if we assume that the depth of $z$ is $2r+1$.
Now \eqref{eq:A2} says
\[
a(\varphi) = \frac{2}{8} \big( (r+1) \cdot 8 + (r+1) \cdot 2 \big) = 5 (r+1) / 2 \in \Z ,
\]
and from \eqref{eqn:depth} we obtain
\[
d(\varphi) = \frac{5 (r+1)}{2 \cdot 2} - 1 = \frac{5 r + 1}{4} . 
\]
Again, this is larger than $d(\phi) = r$.\\[2mm]

\emph{Imprimitive representations}.  Consider an imprimitive totally ramified representation $\varphi : \W_K \to \GL_2 (\C)$.
By \cite[\S 41.4]{BH} there exists a separable totally ramified quadratic extension
$L/K$ and a character $\xi$ of $\W_L$ such that $\varphi = \mathrm{ind}_{\W_L}^{\W_K}(\xi)$.
Let $\mathfrak p_K^{1 + d(L/K)}$ be the discriminant of $L/K$. If $L \cong
K[X]/(X^2 + X + b)$, then one deduces from \cite[\S 41.1]{BH} that $d(L/K) = -\nu_K (b) > 0$.

From the proof of \cite[Lemma 41.5]{BH} one sees that the level of $\varphi$ equals
$d(\xi) + d(E/F)$. By construction the level of a $n$-dimensional irreducible
representation of $\W_K$ equals $n$ times its depth, so
\begin{equation}\label{eq:ddd}
d(\varphi) = \big( d(\xi) + d(L/K) \big) / 2 .  
\end{equation}
As before we assume that $\varphi$ is minimal among the lifts of $\phi$. Then
\cite[\S 41.4]{BH} says that $d(\xi) > d(L/K)$, and in particular $d(\xi) \geq 2$.
Since $\Gal (K_\sep / L)^2$ is a pro-2-group, the image of $\xi$ in $\C^\times$ is a 
subgroup of even order. 

Let $\sigma$ be the nontrivial element of Gal$(L/K)$, so that the restriction of
$\varphi$ to $\W_L$ is $\xi \oplus \sigma (\xi)$. If $\xi (w) = -1$, then also 
$\xi (\sigma (w)) = -1$. As $\xi (\W_L)$ is even, this means that $\matje{-1}{0}{0}{-1} 
\in \phi (\W_L)$. We note that, as every $\W_K \setminus \W_L$ 
interchanges $\xi$ and $\sigma (\xi)$, the kernel of $\phi$ equals the kernel of
$\xi \oplus \sigma (\xi)$ composed with the projection $\GL_2 (\C) \to \PGL_2 (\C)$.
Thus the kernel of $\phi$ contains the kernel of $\varphi$ with index two. More 
precisely 
\[
\ker (\phi) = (\xi \oplus \sigma (\xi))^{-1} \big\{ \matje{1}{0}{0}{1}, 
\matje{-1}{0}{0}{-1} \big\} = \xi^{-1} \{1,-1\} = \ker (\xi^2) .
\]
By the same argument as above also $\ker (\mathrm{ind}_{\W_L}^{\W_K} \xi^2 ) = \ker (\xi^2)$. 
Hence $\phi$ and $\mathrm{ind}_{\W_L}^{\W_K} (\xi^2 )$ have the same kernel, and in
particular the same depth. With \eqref{eq:ddd} we can express it as
\begin{equation}\label{eq:ddd2}
d(\phi) = \big( d(\xi^2) + d(L/K) \big) / 2. 
\end{equation}
The depth (or level) of $\xi$ is the least $l$ such that $\xi$ (or rather its composition
with the Artin reciprocity isomorphism) is nontrivial on the higher units group
$U_L^l = 1 + \mathfrak p_L^l \subset L^\times$. For $l > 0$ the group $U_L^l / U_L^{l+1}$ 
has exponent 2, so $\xi (U_L^{d(\xi)}) = \{1,-1\}$. Consequently $U_L^{d(\xi)} \subset
\ker \xi^2$ and $d(\xi^2) < d(\xi)$. Comparing \eqref{eq:ddd} and \eqref{eq:ddd2},
we get
\[
d(\varphi) - d(\phi) = \big( d(\xi) - d(\xi^2) \big) / 2 > 0 . \qedhere 
\]
\end{proof}

\section{$L$-packets}   According to a classical result of Shelstad \cite[p.200]{She}, for $F$ of characteristic zero 
all the $L$-packets $\Pi_\varphi (\SL_2(F))$ have cardinality $1, 2$ or $4$. We will check 
below, after \eqref{eq:L1},
that the same holds for the $L$-packets for $\SL_2(K)$. It will follow from the classification 
in this section that $L$-packets for $\SL_1 (D)$ have cardinality 1 or 2.

\begin{thm}\textup{\cite{ABPS1}} \label{thm:3.1}  
Let $\phi : \W_K \times \SL_2 (\C) \to \PGL_2 (\C)$ be an L-parameter for $\SL_2 (K)$, and let 
$\varphi : \W_K \times \SL_2 (\C) \to \GL_2 (\C)$ be a lift of minimal depth. For any $\pi$ in 
one of the $L$-packets $\Pi_{\varphi}(\GL_2 (K))$, $\Pi_\varphi (GL_1 (D))$, $\Pi_\phi (\SL_2 (K))$ and 
$\Pi_\phi (\SL_1 (D))$:
\[
d(\phi) \leq d (\varphi) = d(\pi) .
\]
Moreover $d(\phi) = d (\varphi) = d(\pi)$ if $\varphi$ is essentially tame, in particular whenever
$\varphi$ is unramified.
\end{thm}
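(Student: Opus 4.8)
The plan is to assemble this statement from results that are already in hand. The inequality $d(\phi) \le d(\varphi)$ needs nothing new: it is precisely \eqref{eqn: dlifts}, since any lift $\varphi$ of $\phi$ satisfies $\ker \varphi \subset \ker \phi$ and hence cannot become trivial on the ramification filtration later than $\phi$ does.

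Next I would establish $d(\varphi) = d(\pi)$ case by case, according to which of the four $L$-packets contains $\pi$. If $\pi$ lies in one of the $\GL$-packets $\Pi_\varphi(\GL_2(K))$ or $\Pi_\varphi(\GL_1(D))$, this is the depth-preservation of the LLC for inner forms of $\GL_2(K)$, that is, \cite[Theorem~2.9]{ABPS1}, recalled in the introduction. If $\pi$ lies in $\Pi_\phi(\SL_2(K))$ or $\Pi_\phi(\SL_1(D))$, I would use that the LLC for $\SL_2(K)$ and for $\SL_1(D)$ is obtained from that of $\GL_2(K)$ and $\GL_1(D)$ by restriction on the representation side and by the projection $\GL_2(\C)\to\PGL_2(\C)$ on the parameter side \cite{HiSa,ABPS2}; thus $\Pi_\phi(\SL_2(K))$ (resp.\ $\Pi_\phi(\SL_1(D))$) is the set of irreducible constituents of $\widetilde\pi|_{\SL_2(K)}$ (resp.\ $\widetilde\pi|_{\SL_1(D)}$) for any $\widetilde\pi$ in the corresponding $\GL$-packet, the choice of $\widetilde\pi$ being irrelevant because characters of $\GL_2(K)$ and of $\GL_1(D)$ are trivial on the derived subgroup. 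Since $\SL_m(D)$ and $\GL_m(D)$ act on the same Bruhat--Tits building with compatible Moy--Prasad filtrations, restricting an irreducible representation of $\GL_m(D)$ to $\SL_m(D)$ changes neither its parahoric behaviour nor its depth; hence every member of $\Pi_\phi$ has depth $d(\widetilde\pi)$, which equals $d(\varphi)$ once $\varphi$ is taken of minimal depth among the lifts of $\phi$. This is exactly the content of \cite[Proposition~3.4 and Corollary~3.4]{ABPS1}. Combining the cases yields $d(\phi)\le d(\varphi)=d(\pi)$ in all four situations.

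For the last assertion I would argue as follows. If $\varphi$ is essentially tame and of minimal depth among the lifts of $\phi$, then \cite[Theorem~3.8]{ABPS1}, already invoked in Section~\ref{sec:depth}, gives $d(\phi)=d(\varphi)$, so that $d(\phi)=d(\varphi)=d(\pi)$ by the previous step. The unramified case is a special case of this: if $\varphi$ is unramified then $\mathfrak{T}(\varphi)\setminus\{1\}$ contains an unramified character, so $\varphi$ is not totally ramified, hence essentially tame by the discussion following \eqref{eqn:depth}.

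I expect the middle step to be the main point: one must know that Moy--Prasad depth is unchanged when an irreducible representation of $\GL_m(D)$ is restricted to $\SL_m(D)$, equivalently that the parahoric filtrations of the two groups are tightly compatible along the derived subgroup. That is where the real work of \cite[\S 3]{ABPS1} is used; everything else is bookkeeping, namely identifying each of the four flavours of $L$-packet with (constituents of restrictions of) representations in a single $\GL$-packet and then invoking \eqref{eqn: dlifts}.
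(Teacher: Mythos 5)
Your proposal is correct and matches the paper's treatment: the paper does not prove Theorem \ref{thm:3.1} at all but quotes it from \cite{ABPS1}, and your assembly uses exactly the ingredients the paper itself cites — the lift inequality \eqref{eqn: dlifts}, depth preservation for $\GL_m(D)$ \cite[Theorem~2.9]{ABPS1}, depth invariance of the packet under restriction for a minimal-depth lift \cite[Proposition~3.4, Corollary~3.4]{ABPS1}, and the essentially tame equality \cite[Theorem~3.8]{ABPS1}. The only caution is your phrase that restriction to $\SL_m(D)$ ``changes neither its parahoric behaviour nor its depth'': unconditionally restriction can lower depth (twisting by a ramified character changes $d(\widetilde\pi)$ but not $\widetilde\pi|_{\SL}$), so the equality $d(\pi)=d(\varphi)$ really does require the minimal-depth choice, which you do invoke in the right place.
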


We define the groups
\begin{equation}\label{eq: S group}
\begin{aligned}
& C(\phi) := Z_{\SL_2(\C)}(\text{im } \phi) , \\
& \mc S_{\phi} := C(\phi) / C(\phi)^\circ = \pi_0 (Z_{\SL_2 (\C)}(\phi)) , \\
& \mc Z_{\phi} := \rZ(\SL_2(\C)) / \rZ(\SL_2(\C)) \cap C(\phi)^\circ , \\
& S_\phi := \pi_0 (Z_{\PGL_2 (\C)}(\phi)) .
\end{aligned}
\end{equation}
The group $S_\phi$ is abelian, $\mc S_\phi$ can be nonabelian, and there is a short exact sequence 
\begin{equation}\label{eq:L2}
1 \to \mc Z_\phi \to \pi_0 (Z_{\SL_2 (\C)}(\phi)) \to \pi_0 (Z_{\PGL_2 (\C)}(\phi)) \to 1 .
\end{equation}
It is easily seen that $|\mc Z_\phi| = 2$ if and only if $\phi$ is relevant for $\SL_1 (D)$.
By \cite[Theorem 3.3]{ABPS2} there are bijections
\begin{equation}\label{eq:L1}
\begin{aligned}
& \Irr \big( \pi_0 (Z_{\PGL_2 (\C)}(\phi)) \big) \longleftrightarrow \Pi_\phi (\SL_2 (K)) ,\\
& \Irr \big( \pi_0 (Z_{\SL_2 (\C)}(\phi)) \big) \longleftrightarrow 
\Pi_\phi (\SL_2 (K)) \cup \Pi_\phi (\SL_1 (D)) .\\
\end{aligned}
\end{equation}
We remark for $\SL_2 (F)$ with char$(F) = 0$, \eqref{eq:L1} was shown in \cite[Theorem 4.2]{GeKn}
and \cite[Theorem 12.7]{HiSa}.
Recall that $\mf{T}(\varphi)$ is the abelian group of characters $\chi$ of $\W_K$ with 
$\varphi \otimes \chi \cong \varphi$. By \cite[Theorem 4.3]{GeKn} and by \cite[(21)]{ABPS2} 
\begin{equation}\label{eqn:size}
\mf T (\varphi) \cong \pi_0 (Z_{\PGL_2 (\C)}(\phi)). 
\end{equation}
By \cite[Proposition 41.3]{BH}, and by the classification of L-parameters for the principal 
series in Subsection \ref{par:Lone}, $\mf T (\varphi)$ has order dividing four.
This shows that all L-packets for $\SL_2 (K)$ have order 1, 2 or 4.

\subsection{Stability} \

Before we proceed with the classification of $L$-packets, some remarks about the stability of
the associated distributions are in order. In this subsection $K$ can be any local non-archimedean field.
Recall that a class function on an algebraic $K$-group $\cG (K)$ is called stable if it is constant on 
the intersection of any $\cG (K_s)$-conjugacy class with $\cG (K)$. For an invariant distribution
on $\cG (K)$ one would like to use a similar definition of stability, but that does not work well in
general. Instead, stable distributions are usually defined in terms of stable orbital integrals.
But, whenever an invariant distribution $\delta$ on $\cG (K)$ is represented by a class function on an open
dense subset of $\cG (K)$, we can use the easier criterion for stability of functions to 
determine whether or not $\delta$ is stable.

Harish-Chandra proved that the trace of an admissible representation is a distribution which is 
represented by a locally constant function on the set of regular semisimple elements of $\cG (K)$,
see \cite{DBHCS}.
So the study the stability of traces of $\cG (K)$-representations, it suffices to look at
(regular) semisimple elements of $\cG (K)$.

For semisimple elements in $\GL_2 (K)$ conjugacy is the same as stable conjugacy, it is determined by 
characteristic polynomials. Hence every irreducible (admissible) representation of $\GL_2 (K)$ defines 
a stable distribution.

The semisimple conjugacy classes in $\GL_1 (D)$ are naturally in bijection with the elliptic conjugacy 
classes in $\GL_2 (K)$, i.e. those semisimple classes for which the characeristic polynomials are
irreducible over $K$. Moreover any irreducible essentially square-integrable representation of 
$\GL_2 (K)$ is already determined by the values of its trace on elliptic elements. These observations 
constitute some of the foundations of the Jacquet--Langlands correspondence \cite{JaLa}. In fact the 
Jacquet--Langlands correspondence can be defined as the unique bijection between $\Irr (\GL_1 (D))$ and 
the essentially square-integrable representations in $\Irr (\GL_2 (K))$ which preserves the traces on 
elliptic conjugacy classes, up to a sign. Consequently the trace of any irreducible representation $\pi$
of $\GL_1 (D)$ is the restriction of a stable distribution on $\GL_2 (K)$ to the set of elliptic
elements. In particular the trace of $\pi$ is itself a stable distribution.

\begin{prop}\label{prop:3.2}
Let $\phi$ be a L-parameter for $\SL_2 (K)$.
\begin{itemize}
\item[(a)] Write $\Pi_\phi (\SL_2 (K)) = \{ \pi_1, \ldots, \pi_m\}$.
The trace of $\pi := \pi_1 \oplus \cdots \oplus \pi_m$ is a stable distribution on $\SL_2 (K)$.
Any other stable distribution that can be obtained from $\Pi_\phi (\SL_2 (K))$ is a scalar 
multiple of the trace of $\pi$.
\item[(b)] Suppose that $\phi$ is relevant for $\SL_1 (D)$ and write $\Pi_\phi (\SL_1 (D)) =
\{\pi'_1, \ldots, \pi'_{m'} \}$. The trace of $\pi' := \pi'_1 \oplus \cdots \oplus \pi'_{m'}$ is a stable 
distribution on $\SL_1 (D)$. Any other stable distribution that can be obtained from $\Pi_\phi (\SL_1 (D))$ 
is a scalar multiple of the trace of $\pi'$. 
\end{itemize}
\end{prop}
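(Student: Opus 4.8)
The plan is to deduce the proposition from the parametrization \eqref{eq:L1} together with the known stability properties on the $\GL$-side discussed above. First I would recall that the members of $\Pi_\phi(\SL_2(K))$, respectively $\Pi_\phi(\SL_1(D))$, are exactly the irreducible constituents of the restrictions to $\SL_2(K)$, respectively $\SL_1(D)$, of the members of $\Pi_\varphi(\GL_2(K))$, respectively $\Pi_\varphi(\GL_1(D))$, where $\varphi$ is a lift of $\phi$; this is the content of the construction behind \eqref{eq:L1} and \cite[Theorem 3.3]{ABPS2}. Since $\Pi_\varphi(\GL_2(K))$ is a single representation $\widetilde\pi$ (the LLC for $\GL_2$ being a bijection) whose trace is stable on $\GL_2(K)$ by the discussion preceding the proposition, the idea is that the trace of $\pi=\bigoplus_i\pi_i$ is obtained by restricting $\Tr\widetilde\pi$ from $\GL_2(K)$ to $\SL_2(K)$, and restriction of a stable class function to a subgroup that is a union of stable conjugacy classes is again stable.

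The key steps, in order, are: (1) show $\Tr\pi$ is, on the regular semisimple set, the restriction of $\Tr\widetilde\pi$ to $\SL_2(K)$ — more precisely $\widetilde\pi|_{\SL_2(K)}=\bigoplus_i m_i\pi_i$ with all multiplicities equal to a common value $m$ (here $m=1$ for $\GL_2$), using Clifford theory for the normal subgroup $\SL_2(K)\trianglelefteq\GL_2(K)$ with abelian quotient $K^\times$; (2) observe that an $\SL_2(K_s)$-conjugacy class in $\SL_2(K)$ is contained in a single $\GL_2(K_s)$-conjugacy class, so that the restriction to $\SL_2(K)$ of the stable function $\Tr\widetilde\pi$ is constant on $\SL_2(K_s)$-conjugacy classes intersected with $\SL_2(K)$, i.e. is stable; (3) for the uniqueness clause, invoke \eqref{eqn:size} and the short exact sequence \eqref{eq:L2}: the stable distributions supported on $\Pi_\phi(\SL_2(K))$ correspond, after pairing against the component group, to the trivial character of $S_\phi=\pi_0(Z_{\PGL_2(\C)}(\phi))$, which appears with multiplicity one, so up to scalars there is exactly one such distribution, namely $\Tr\pi$. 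For part (b) I would run the identical argument with $\GL_1(D)$ in place of $\GL_2(K)$: the Jacquet–Langlands correspondence (recalled just above the proposition) gives that the trace of any irreducible $\GL_1(D)$-representation is stable on $\GL_1(D)$, the semisimple classes in $\SL_1(D)$ sit inside single $\GL_1(D)$-stable classes via the bijection with elliptic classes in $\GL_2(K)$, and the uniqueness again follows from the component-group bookkeeping in \eqref{eq:L2}, \eqref{eq:L1}.

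I expect the main obstacle to be step (3), the precise statement that ``any other stable distribution obtained from the packet is a scalar multiple of $\Tr\pi$.'' This requires knowing not merely that $\Tr\pi$ is stable, but that the span of stable distributions inside the span of $\{\Tr\pi_1,\dots,\Tr\pi_m\}$ is exactly one-dimensional. The clean way to see this is the endoscopic character identity: under \eqref{eq:L1} the distribution $\sum_i\langle\rho_i,s\rangle\Tr\pi_i$ transfers to an ($s$-twisted) stable distribution on an endoscopic group, and it is stable on $\SL_2(K)$ itself only when $s$ lies in the identity component, i.e. only for the trivial character — so the stable ones form a line. An alternative, more hands-on route is to use that stable conjugacy classes in $\SL_2(K)$ decompose into $\SL_2(K)$-classes with orbits permuted simply transitively by $K^\times/(K^\times)^2$ acting through $\GL_2(K)$-conjugation, and that this action permutes $\pi_1,\dots,\pi_m$ transitively (again Clifford theory); a class function on $\SL_2(K)$ in the span of the $\Tr\pi_i$ is stable iff it is constant along these orbits iff it is a multiple of $\sum_i\Tr\pi_i=\Tr\pi$. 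Either way the bookkeeping is the delicate part; the stability of $\Tr\pi$ itself is essentially formal given the $\GL$-side input.
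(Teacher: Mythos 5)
Your proposal is correct and, in substance, follows the paper's own route: stability of $\Tr \pi$ comes from realising $\pi$ as the (multiplicity-free) restriction of the irreducible $\GL_2(K)$-representation attached to a lift $\varphi$, whose trace is stable, and for $\SL_1(D)$ one uses the Jacquet--Langlands transfer together with the fact that all constituents of the restriction occur with a common multiplicity $\mu$, so that $\Tr(\pi')=\mu^{-1}\Tr(\mathrm{JL}(\pi))$ on $\SL_1(D)$ -- exactly as in the paper. For the uniqueness clause, the paper does not use your first suggestion (the endoscopic character identity and the pairing with $S_\phi$, which is heavier machinery and arguably presupposes part of what is being established); it uses precisely your ``hands-on'' alternative: since $\pi$ is irreducible, the $\pi_i$ form a single $\GL_2(K)$-conjugation orbit, a stable distribution on $\SL_2(K)$ must be invariant under $\GL_2(K)$-conjugation (such conjugation being a special case of stable conjugacy), and linear independence of the characters $\Tr\pi_i$ then forces all coefficients in a stable combination to be equal; the same argument applies verbatim to $\SL_1(D)$ with $\GL_1(D)$-conjugation. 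Your phrase ``simply transitively'' for the $K^\times/(K^\times)^2$-action is an inessential over-statement -- only transitivity of the permutation of the $\pi_i$ is needed -- and with the hands-on route retained, the component-group bookkeeping of \eqref{eq:L1}, \eqref{eq:L2}, \eqref{eqn:size} is not required for the proof itself.
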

\begin{proof}
(a) Since the restriction of irreducible representations from $\GL_2 (K)$ to $\SL_2 (K)$ is multiplicity-free 
\cite[\S 1]{BuKu}, $\pi = \pi_1 \oplus \cdots \oplus \pi_m$ is the restriction of some irreducible 
representation of $\GL_2 (K)$. If $\varphi : \W_K \times \SL_2 (\C) \to \GL_2 (\C)$ is any lift of
$\phi$, the image of $\phi$ under the local Langlands correspondence is such a representation. We
denote this representation of $\GL_2 (K)$ again by $\pi$. By the above remarks, its trace is a stable
distribution on $\GL_2 (K)$, and hence also on $\SL_2 (K)$. 

The different $\pi_i$ are inequivalent, but they are $\GL_2 (K)$ conjugate, because $\pi$ is irreducible.
If a linear combination $\sum_{i=1}^m \lambda_i \mathrm{tr}(\pi_i)$ is a stable distribution, 
then it must be invariant under conjugation by $\GL_2 (K)$. Hence all the $\lambda_i \in \C$ must be equal.

(b) The restriction of representations from $\GL_1 (D)$ to $\SL_1 (D)$ can have multiplicities,
but still every constituent will appear with the same multiplicity \cite[Lemma 2.1.d]{GeKn}. So
there exists an integer $\mu$ such that $\mu \pi' = \mu \pi'_1 \oplus \cdots \oplus \mu \pi'_{m'}$ lifts
to an irreducible representation of $\GL_1 (D)$. The L-parameter of such a representation is a lift
of $\phi$, so we can take JL$(\pi)$, the image of $\pi$ under the Jacquet--Langlands correspondence. 

As remarked above, tr(JL$(\pi))$ is stable distribution on $\GL_1 (D)$ and by restriction also on 
$\SL_1 (D)$. Thus tr$(\pi') = \mu^{-1} \text{tr(JL)} (\pi))$ is also a stable distribution on $\SL_1 (D)$. 
By the same argument as for part (a), any linear combination of the tr $(\pi'_i)$ which is stable, 
must be a scalar multiple of tr$(\pi')$. 
\end{proof}

We remark that Proposition \ref{prop:3.2} also holds for inner forms of $\SL_n (F)$ with $n>2$.
The proof is the same, one only has to replace the elliptic conjugacy classes by the conjugacy
classes that correspond to elements of that particular inner form.

\subsection{$L$-packets of cardinality one} \label{par:Lone} \

First we consider the case that $\varphi : \W_K \to \GL_2 (\C)$ is irreducible, so the
$L$-packet consists of supercuspidal representations.
By \eqref{eqn:size} and \eqref{eq:L1}, $\Pi_\phi (\SL_2 (K))$ is a singleton if
and only if $\varphi$ is primitive. The L-parameter $\phi$ is relevant for $\SL_1 (D)$, so 
$\Pi_\phi (\SL_1 (D))$ is nonempty. It follows from \eqref{eq:L1} and \eqref{eq:L2} that 
$\mc Z_\phi \cong \pi_0 (Z_{\SL_2 (\C)}(\phi)) \cong \Z / 2 \Z$, and then from \eqref{eq:L1}
that $\Pi_\phi (\SL_1 (D))$ is also a singleton. Any primitive representation of $\W_K$ is
either octahedral or tetrahedral, as in Section \ref{sec:depth}. See \cite[\S 42]{BH}
for more background.

Suppose now that $\varphi : \W_K \to \GL_2 (\C)$ is reducible, so $\phi$ is a L-parameter
for the principal series of $\SL_2 (K)$. If $\phi (\W_K) = 1$ and $\phi |_{\SL_2 (\C)} :
\SL_2 (\C) \to \PGL_2 (\C)$ is the canonical projection, then $\phi$ is relevant for
$\SL_1 (D)$. In this case $\Pi_\phi (\SL_1 (D))$ is just the trivial representation of
$\SL_1 (D)$, and $\Pi_\phi (\SL_2 (K))$ consists of the Steinberg representation of
$\SL_2 (K)$ -- the unique irreducible square-integrable, non-supercuspidal representation.

All other principal series L-parameters are trivial on $\SL_2 (\C))$ and are irrelevant
for $\SL_1 (D)$. By conjugating $\phi$, we may assume that its image is contained in the
diagonal torus of $\PGL_2 (\C)$. One checks that $Z_{\PGL_2 (\C)}(\phi)$ is connected unless 
the image of $\phi$ is $\{ 1, \matje{-1}{0}{0}{1} \}$. Whenever $Z_{\PGL_2 (\C)}(\phi)$ 
is disconnected, its L-packet has two elements, see Subsection \ref{par:princ}.  

If $Z_{\PGL_2 (\C)}(\phi)$ is connected, then $\Pi_\phi (\SL_2 (K))$ consists of precisely 
one principal series representation. Let $T$ be the diagonal torus of $\SL_2 (K)$, and let 
$\chi_\phi$ be the character of $T$ determined by local class field theory. Then
$\Pi_\phi (\SL_2 (K))$ is the Langlands quotient of the parabolic induction of $\chi_\phi$,
and the depth of that representation equals the depth of $\chi_\phi$.

\subsection{Supercuspidal $L$-packets of cardinality two} \
 
For such L-parameters (\ref{eqn:size}) shows that
\[
\mathfrak{T} (\varphi) \cong \pi_0 (Z_{\PGL_2 (\C)}(\phi)) \cong \Z / 2 \Z .
\] 
The L-parameter $\phi$ is relevant for $\SL_1 (D)$, so by \eqref{eq:L2} 
$|\pi_0 (Z_{\SL_2 (\C)}(\phi))| = 4$. Then $\pi_0 (Z_{\SL_2 (\C)}(\phi))$ is either $\Z / 4\Z$
or $(\Z / 2 \Z )^2$. In any case, it is abelian
and has precisely four inequivalent characters. Now \eqref{eq:L1} says that
\[ 
|\Pi_\phi (\SL_1 (D))| = |\Pi_\phi (\SL_2 (K))| = 2.
\]

Now we classify the discrete L-parameters $\phi$ for which the packet $\Pi_\phi (\SL_2 (K))$
is not a singleton.
We note that every L-parameter for a supercuspidal representation of $\SL_2 (K)$ has to
be trivial on $\SL_2 (\C)$. For if it were nontrivial on $\SL_2 (\C)$, then the image of $\W_K$
would be in the centre of $\PGL_2 (\C)$, and we would get the L-parameter for the Steinberg
representation, as discussed in the previous subsection. Since we want $\phi$ to be discrete,
it has to be an irreducible projective two-dimensional representation of $\W_K$.

Let $\varphi$ be an irreducible two-dimensional representation of $\W_K$ which lifts $\phi$. 
By \eqref{eqn:size} and \eqref{eq:L1} the associated $L$-packet $\Pi_\phi (\SL_2 (K))$ has 
more than one element if and only if $\varphi$ is imprimitive. By \cite[\S 41.3]{BH} 
$\varphi$ is imprimitive if and only if there exists a separable quadratic 
extension $E/K$ and a character $\xi$ of $E^\times$ such that $\varphi \cong \Ind_{E/K} \xi$.  
By the irreducibility $\xi^\sigma \neq \xi$, where
$\sigma$ is the nontrivial automorphism of $E$ over $K$.

\begin{lem}\label{lem:3.3}
Let $\phi$ and $\varphi \cong \Ind_{E/K} \xi$ be as above. 
\begin{itemize}
\item[(a)] Suppose that the character $\xi^\sigma \xi^{-1}$ of $E^\times$ has order two.
Then $\varphi$ is triply imprimitive and there exists a biquadratic extension $L / K$
such that $\ker (\phi) = \W_L$ and $L \supset E$.
\item[(b)] Suppose that $\xi^\sigma \xi^{-1}$ has order $>2$. 
Then $\varphi$ is simply imprimitive.
\end{itemize}
\end{lem}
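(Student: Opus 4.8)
The plan is to analyze the character $\theta := \xi^\sigma \xi^{-1}$ of $E^\times$ and to relate its order to the structure of $\mathfrak{T}(\varphi)$, where $\varphi = \Ind_{E/K}\xi$. The basic principle, which I would invoke from \cite[\S 41]{BH}, is the following: for a character $\chi$ of $\W_K$, we have $\varphi \otimes \chi \cong \varphi$ if and only if either $\chi|_{\W_E} = 1$ (i.e. $\chi$ factors through $\Gal(E/K)$, contributing the nontrivial quadratic character $\chi_{E/K}$ cutting out $E$), or $\chi|_{\W_E} \neq 1$ but $\Ind_{E/K}(\xi) \cong \Ind_{E/K}(\xi \cdot (\chi|_{\W_E}))$. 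The latter isomorphism of induced representations holds precisely when $\xi \cdot (\chi|_{\W_E})$ equals $\xi$ or $\xi^\sigma$; since $\chi|_{\W_E} \neq 1$, this forces $\chi|_{\W_E} = \xi^\sigma \xi^{-1} = \theta$. So a nontrivial $\chi \in \mathfrak{T}(\varphi)$ with $\chi|_{\W_E} \neq 1$ exists if and only if $\theta$ extends to a character of $\W_K$, and — crucially — since every nontrivial element of $\mathfrak{T}(\varphi)$ has order $2$ (as observed in the excerpt via determinants), this can happen only when $\theta$ itself has order dividing $2$.

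First I would treat part (a). Assume $\theta$ has order exactly two. Then $\theta$ is a nontrivial quadratic character of $\W_E$ which is fixed by $\sigma$: indeed $\theta^\sigma = (\xi^\sigma \xi^{-1})^\sigma = \xi^{\sigma^2}(\xi^\sigma)^{-1} = \xi (\xi^\sigma)^{-1} = \theta^{-1} = \theta$. A $\sigma$-invariant quadratic character of $\W_E$ extends to a character $\chi$ of $\W_K$ (the obstruction to extending lies in $H^2(\Gal(E/K), \C^\times) = 0$ for a cyclic group, or one argues directly); this $\chi$ then lies in $\mathfrak{T}(\varphi)$ by the principle above, and it is independent of $\chi_{E/K}$ since $\chi|_{\W_E} = \theta \neq 1$ while $\chi_{E/K}|_{\W_E} = 1$. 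Hence $\mathfrak{T}(\varphi) \supseteq \{1, \chi_{E/K}, \chi, \chi\chi_{E/K}\}$ has order $4$, i.e. $\varphi$ is triply imprimitive. For the field $L$: since $\varphi$ is triply imprimitive, by the theory in \cite[\S 41.3]{BH} there are three quadratic subextensions, and $\phi = $ projection of $\varphi$ has $\ker(\phi)$ of index $|\phi(\W_K)| = 4$ in $\W_K$ (using that $\matje{-1}{0}{0}{-1} \in \varphi(\W_E)$, which holds because $d(\xi) \geq 2$ forces $\xi$ to have even order, exactly as in the imprimitive case of the proof of the previous theorem). Thus $\ker(\phi) = \W_L$ for a biquadratic $L/K$, and $E$ is one of its three quadratic subfields, so $L \supset E$.

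For part (b), assume $\theta$ has order $> 2$. By the principle above, any nontrivial $\chi \in \mathfrak{T}(\varphi)$ either has $\chi|_{\W_E} = 1$, forcing $\chi = \chi_{E/K}$, or has $\chi|_{\W_E} = \theta$; but the latter is impossible since $\chi$ has order $\leq 2$ (all nontrivial elements of $\mathfrak{T}(\varphi)$ do) whereas $\theta = \chi|_{\W_E}$ would then have order $\leq 2$, a contradiction. Hence $\mathfrak{T}(\varphi) = \{1, \chi_{E/K}\}$ has order two and $\varphi$ is simply imprimitive.

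The main obstacle I anticipate is making the extension step in (a) clean and the computation of $\ker(\phi)$ precise: one must be careful that the extension $\chi$ of $\theta$ can be chosen to have order $2$ (not $4$) so that $\chi \in \mathfrak{T}(\varphi)$ genuinely — if $\Gal(E/K)$ were not cyclic this could fail, but here $\Gal(E/K) \cong \Z/2\Z$ so $H^2$ vanishes and moreover a $\sigma$-fixed order-$2$ character of $\W_E$ does extend to an order-$2$ character of $\W_K$ (choose any extension; it has order $2$ or $4$; if order $4$, twist by $\chi_{E/K}$ or use that $\theta$ $\sigma$-fixed means $\theta$ descends at the level of $E^\times \to K^\times$ via the norm — concretely $\theta = \eta \circ N_{E/K}$ for some quadratic $\eta$ of $K^\times$ by Hilbert 90 type reasoning on characters). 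I would spell this last point out carefully, as it is the only place the biquadratic (as opposed to cyclic quartic) structure in (a) really gets pinned down.
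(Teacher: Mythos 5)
Your dichotomy argument---part (b) and the ``triply versus simply imprimitive'' half of part (a)---is correct and essentially the paper's own: the paper invokes \cite[Corollary 41.3]{BH} to say that $\mathfrak{T}(\varphi)=\{1,\chi_E\}$ exactly when $\theta=\xi^\sigma\xi^{-1}$ does not extend to $\W_K$, and then uses divisibility of $\C^\times$ together with $\theta^\sigma=\theta^{-1}$ to reduce extendability to the order-two condition; your projection-formula argument combined with the fact that every nontrivial element of $\mathfrak{T}(\varphi)$ has order two is an acceptable variant of the same reasoning.

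The genuine gap is in the second half of (a), the construction of $L$. First, your justification that $\matje{-1}{0}{0}{-1}\in\varphi(\W_E)$ leans on $d(\xi)\geq 2$, ``exactly as in the imprimitive case of the previous theorem''; but there that inequality came from assuming $\varphi$ totally ramified and of minimal depth among the lifts of $\phi$ (via \cite[\S 41.4]{BH}), and neither hypothesis is present in Lemma \ref{lem:3.3}---here $E/K$ may even be unramified and $\xi$ of depth zero. Moreover, even granting that $\xi$ has even order (which does follow in case (a), since $\theta=\xi^\sigma\xi^{-1}$ has order $2$), that only produces $w$ with $\xi(w)=-1$, whereas $\matje{-1}{0}{0}{-1}\in\varphi(\W_E)$ needs $\xi(w)=\xi^\sigma(w)=-1$, i.e.\ such a $w$ inside $\ker\theta$; so the step does not stand as written. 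Second, even granting $[\W_K:\ker\phi]=4$, you must still exclude $\W_K/\ker\phi\cong\Z/4\Z$ before calling $L/K$ biquadratic; you flag this at the end, but the repair you sketch (an order-two extension $\chi$ of $\theta$) is never tied back to $\ker\phi$. The paper sidesteps both issues by computing $\ker\phi=\{w\in\W_K:\varphi(w)\text{ scalar}\}$ directly: any $w\in\W_K\setminus\W_E$ swaps the two eigenlines of $\varphi|_{\W_E}=\xi\oplus\xi^\sigma$, hence is non-scalar, while $w\in\W_E$ gives a scalar iff $\theta(w)=1$; thus $\ker\phi=\ker\theta=\W_L$ for the quadratic extension $L/E$ attached to $\theta$ by class field theory, this kernel is normal so $L/K$ is Galois, and the explicit matrix form (anti-diagonal classes square to scalars, the diagonal image has order two) shows $\phi(\W_K)$ is the Klein four group, whence $\Gal(L/K)\cong(\Z/2\Z)^2$ and $L\supset E$. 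If you prefer your route, the clean fix is: establish $\ker\phi=\ker\theta$ by the scalar computation above, then note $\ker\phi=\W_E\cap\ker\chi$ for your order-two extension $\chi$, so $L$ is the compositum of $E$ with the quadratic extension cut out by $\chi$, hence biquadratic; this makes the appeals to $\matje{-1}{0}{0}{-1}\in\varphi(\W_E)$ and to $d(\xi)\geq 2$ unnecessary.
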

\begin{proof}
Let $\chi_E$ be the unique character of $\W_K$ with kernel $\W_E$. Then 
$\chi_E \in \mathfrak{T}(\varphi)$, this holds in general for 
induction of irreducible representations from subgroups of index two. In particular
$|\mathfrak{T}(\varphi)| \in \{2,4\}$. From \cite[Corollary 41.3]{BH} we see that 
$\mathfrak{T}(\varphi) = \{ 1, \chi_E \}$ if and only if the character $\xi^\sigma \xi^{-1}$ of 
$\W_E$ cannot be lifted to a character of $\W_F$. Since the target group $\C^\times$ is divisible,
this happens if and only if $\xi^\sigma \xi^{-1}$ does not equal 
\[
(\xi^\sigma \xi^{-1})^\sigma = \xi \xi^{-\sigma} = (\xi^\sigma \xi^{-1} )^{-1}.
\]
We conclude that the representation $\varphi = \Ind_{E/K} \xi$ is triply imprimitive
if $\xi^\sigma \xi^{-1}$ has order two and is simply imprimitive otherwise.

Now we focus on the triply imprimitive case. By local class field theory there exists a unique
separable quadratic extension $L/E$ such that $\xi^\sigma \xi^{-1}$ is the associated character
$\chi_L $ of $E^\times$. We consider it also as a character of $\W_E$. Then
\[
\W_L = \ker (\chi_L) = \{ w \in \W_K : \varphi (w) \in Z(\GL_2 (\C)) \} .
\]
Hence $\W_L = \ker (\phi)$ is a normal subgroup of $\W_K$, which means that $L /K$ is a Galois
extension. The explicit form of $\varphi$ entails that the image of $\phi$ is the Klein four group.
Consequently
\begin{equation}\label{biquad}
\mathrm{Gal}(L/K) \cong \W_K / \W_L \cong \phi (\W_K) \cong (\Z / 2 \Z)^2,
\end{equation}
which says that $L/K$ is biquadratic. 
\end{proof}

We remark that the depth of $\varphi = \Ind_{E/K} \xi$ can be computed in the same way as for 
the imprimitive representations in Section \ref{sec:depth}, see in particular \eqref{eq:ddd}.




 

\subsection{Supercuspidal $L$-packets of cardinality four} 
\label{par:four}  We continue with the case when $\varphi$ is triply imprimitive, as in (\ref{biquad}). This means that we have a biquadratic extension $L/K$ and
the Langlands parameter
\begin{equation}\label{klein}
\phi : W_K \to \mathrm{Gal}(L/K) \cong (\Z / 2 \Z)^2 \subset \PGL_2(\C).
\end{equation}
We also have
\[
Z_{\PGL_2 (\C)}(\im \, \phi) = \pi_0 (Z_{\PGL_2 (\C)}(\im \,\phi)) = S_\phi \cong (\Z / 2\Z)^2 . 
\]
This implies, by (\ref{eq:L1}),  that $\Pi_{\phi}(\SL_2(K))$ is a supercuspidal packet of cardinality $4$.



We note the isomorphism $\PGL_2(\C) = \PSL_2(\C)$, and the morphism
\[
\SL_2(\C) \to \PSL_2(\C).
\]
As in \cite[\S 14]{We}, the pull-back $\mathcal{S}_{\phi}$  of $S_{\phi}$ is isomorphic to the 
the group of unit quaternions $\{\pm 1, \pm \i, \pm \j, \pm \k\}$.   This group 
admits four characters  
and one irreducible representation of degree $2$. Only the two-dimensional representation
$\rho_0$ has nontrivial central character.

The parameter $\phi$ creates a  packet with five elements, which are allocated to $\SL_2(K)$ or $\SL_1(D)$ 
according to central characters.   So $\phi$ gives rise to an $L$-packet $\Pi_{\phi} (\SL_2(K))$ with $4$ elements, 
and a singleton packet to the inner form $\SL_1(D)$.

\begin{thm}\label{ddd}  
Let $L/K$ be a biquadratic extension, let $\phi$ be the Langlands parameter (\ref{klein}).
If  $t$ is the highest break in the upper ramification of $\Gal(L/K)$ then $d(\phi) = t$.  
The allowed values of $d(\phi)$  are $1,3,5,7, \ldots$ except in Case 2.2 (see Appendix \ref{par:ram}), 
when the allowed values are $3,5,7, \ldots$.
\end{thm}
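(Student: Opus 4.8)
The plan is to reduce everything to the computation of the depth of a projective representation $\phi\colon\W_K\to(\Z/2\Z)^2\subset\PGL_2(\C)$ whose image is the Klein four group realized as $D_2$ in \eqref{eq:D2}, and then to translate the abstract definition \eqref{eqn: depthSL} of the depth into a statement about the ramification filtration of $\Gal(L/K)$ in the \emph{upper} numbering. First I would recall that, since $\phi$ factors through $\Gal(L/K)$ and $\ker\phi=\W_L$, the condition ``$\Gal(K_\sep/K)^{t+}\subset\ker\phi$'' is equivalent to ``the image of $\Gal(K_\sep/K)^{t+}$ in $\Gal(L/K)$ is trivial'', i.e. $\Gal(L/K)^{t+}=\{1\}$. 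By the very definition of the upper numbering filtration (Herbrand's function is compatible with quotients, so the upper numbering on $\Gal(L/K)$ is induced from that on $\Gal(K_\sep/K)$), this says precisely that $t$ is at least the largest jump in the upper ramification filtration of $\Gal(L/K)$; taking the infimum gives $d(\phi)=t$ where $t$ is the highest break. This is the conceptual heart of the statement, and it is short once the compatibility of upper numbering with quotients (Herbrand, \cite[Chap.~IV]{Ser}) is invoked.

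Next I would record that $d(\phi)$ is independent of the choice of realization of $\phi$ inside $\PGL_2(\C)$ — it depends only on $L/K$ — and in particular does \emph{not} require passing to the lift $\varphi$ or to $\Ad\circ\phi$; that was needed in Section~\ref{sec:depth} only to compute $d(\varphi)$, not $d(\phi)$. For the lower-bound assertion $d(\pi)\ge d(\phi)$ for $\pi\in\Pi_\phi(\SL_2(K))\cup\Pi_\phi(\SL_1(D))$, this is immediate from Theorem~\ref{thm:3.1} (the inequality $d(\phi)\le d(\pi)$ holds for every member of the packet), so nothing new is needed there; I would simply cite it.

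The remaining, and genuinely laborious, part is the determination of the \emph{allowed values} of $t$: one must show that as $L/K$ ranges over all biquadratic extensions, the highest upper-numbering break takes exactly the values $1,3,5,7,\dots$, with $3,5,7,\dots$ in Case~2.2. Here I would invoke the Appendix on Artin--Schreier theory (Subsection~\ref{par:ram}): a biquadratic $L/K$ corresponds, via Artin--Schreier, to a two-dimensional $\F_2$-subspace of $K/\wp(K)$, and the upper-numbering breaks of $\Gal(L/K)$ are read off from the $\varpi$-adic valuations of (suitably normalized) representatives of that subspace. The key facts are: (i) for a single Artin--Schreier extension $K(\wp^{-1}(a))/K$ with $a$ of valuation $-m<0$, one may normalize so that $m$ is odd (the reduction step modulo $\wp(K)$, since $\wp(\varpi^{-k})\equiv\varpi^{-2k}$), and then the unique break is $m$; (ii) for the biquadratic case the three quadratic subextensions have breaks $m_1,m_2,m_3$ which, by the ultrametric behaviour of $\wp$ on sums, satisfy the familiar ``two are equal to the max, or all three coincide'' pattern, and the highest upper-numbering break of $\Gal(L/K)$ equals $\max(m_1,m_2,m_3)$, always odd; (iii) the exceptional Case~2.2 is precisely the configuration in which the value $1$ cannot be attained, forcing the minimum break to be $\ge 3$. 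I expect step (iii) — pinning down exactly which biquadratic configurations are ``Case~2.2'' and why $t=1$ is then excluded, versus showing every odd $t\ge 1$ (resp. $\ge 3$) is realized by an explicit $L/K$ — to be the main obstacle, and I would handle it by exhibiting explicit generators $\wp^{-1}(\varpi^{-t})$, $\wp^{-1}(u\varpi^{-t})$ etc. and appealing to the valuation computations catalogued in Appendix~\ref{par:ram}.
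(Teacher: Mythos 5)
Your proposal is correct and follows essentially the same route as the paper: the identity $d(\phi)=t$ is obtained from the compatibility of the upper-numbering filtration with the quotient $\Gal(K_\sep/K)\twoheadrightarrow\Gal(L/K)$ (the paper packages Herbrand's compatibility in a commutative diagram of inertia groups, but the content is identical), and the determination of the allowed values $1,3,5,\dots$ (resp.\ $3,5,\dots$ in Case 2.2) is read off from the Artin--Schreier/Dalawat case analysis of the induced filtration on the $\F_2$-plane $W\subset K/\wp(K)$, exactly as in Appendix \ref{par:ram} and the paper's three-case enumeration. Your extra remarks (that no lift $\varphi$ or $\Ad\circ\phi$ is needed for $d(\phi)$, and that $d(\pi)\geq d(\phi)$ follows from Theorem \ref{thm:3.1}) are consistent with the paper but not part of this theorem's proof.
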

\begin{proof}  
From the inclusion $L\subset K_s$ we obtain a natural surjection
\[
\pi_{L/K}\colon\Gal(K_s /K)\rightarrow \Gal(L/K).
\]
Let $K_{\ur}$ be the maximal unramified extension of $K$ in $K_s$ and let 
$K_{\ab}$ be the maximal abelian extension of $K$ in $K_s$.
We have a commutative diagram, where the horizontal maps are the canonical 
maps and the vertical maps are the natural projections
\[
\xymatrix{
1 \ar[r]& I_{K_s /K} \ar[d]_{\alpha_1}\ar[r]^{\iota_1}
& \Gal(K_s /K) \ar[d]_{\pi_1}\ar[r]^{p_1} & \Gal(K_{\ur}/K) \ar[d]_{id}\ar[r]&  1\\
1 \ar[r]& I_{K_{\ab}/K} \ar[d]_{\alpha_2}\ar[r]^{\iota_2}
& \Gal(K_{\ab}/K) \ar[d]_{\pi_2}\ar[r]^{p_2} & \Gal(K_{\ur}/K) \ar[d]_{\beta}\ar[r]& 1\\
1 \ar[r]& \I_{L/K} \ar[r]^{\iota_3}
& \Gal(L/K) \ar[r]^{p_3} & \Gal(L\cap K_{\ur}/K) \ar[r] & 1
}
\]
In the above notation, we have $\pi_{L/K}=\pi_2\circ\pi_1$.
Let
\begin{equation}
\cdots \subset\I^{(2)}\subset \I^{(1)} \subset \I^{(0)}\subset G=\Gal(L/K)
\end{equation}
be the filtration of the
relative inertia subgroup $\I^{(0)}=\I_{L/K}$ of $\Gal(L/K)$, $\I^{(1)}$ is the wild inertia subgroup, 
and so on. Note that $\I^{(r)}$ is the restriction of the filtration $G^r$ of 
$G=\Gal(L/K)$ to the subgroup $\I_{L/K}$, i.e, $\I^{(r)}=\iota_3(G^r)$.
Let
\begin{equation}
\cdots\subset I^{(2)}\subset I^{(1)} \subset I^{(0)}\subset G=\Gal(\overline{K}/K)
\end{equation}
be the filtration of the absolute inertia subgroup $I^{(0)}=I_{K_s /K}$ of 
$\Gal(K^s/K)$, $I^{(1)}$ is the wild inertia subgroup, and so on.

We have
\begin{equation}\label{inertia}
(\forall r ) \; \pi_{L/K} I^{(r)}  = \I^{(r)}
\end{equation}
This follows immediately from the above diagram. Here, we identify $I^{(r)}$ with 
$\iota_1(I^{(r)})$ and $\I^{(r)}$ with $\iota_3(\I^{(r)})$.
(Note that $\alpha$ is \emph{injective}.  Therefore, by (\ref{inertia}), we have
\[
\phi(I^{(r)}) = 1 \iff (\alpha \circ \pi_{L/K})(I^{(r)}) = 1 \iff \alpha(\I^{(r)}) = 1 \iff \I^{(r)} = 1.
\]
The \emph{highest break} $t$ has the property that $I^{(t+1)}  = 1$ and $I^{(t)} \neq I^{(t+1)}$.
It follows that $d(\phi) = t$.

\medskip

\textbf{Case $1$:} There are two ramification breaks occurring at $-1$ and some odd integer $t>0$:
\[
\{1\}=\cdots= \I^{(t+1)}\subset \I^{(t)}=\cdots=\I^{(0)}= \I_{L/K}\subset \Gal(L/K), \quad d(\phi) = t.
\]
The allowed depths are $1$, $3$, $5$, $7$, $\ldots$.

\medskip

\textbf{Case $2.1$:} One single ramification break occurs at some odd integer $t>0$:
\[
\{1\}=\cdots= \I^{(t+1)}\subset \I^{(t)}=\cdots= \I^{(0)}= \I_{L/K}= \Gal(L/K); \quad d(\phi)  = t.
\]
The allowed depths are $1,3,5,7,\ldots$.

\medskip

\textbf{Case $2.2$:} There are two ramification breaks occurring at some odd integers $t_1<t_2$
(with $\I^{(0)}= \I_{L/K}$) :
\[
\{1\}=\cdots= \I^{(t_2+1)}\subset \I^{(t_2)}=\cdots= \I^{(t_1+1)}\subset \I^{(t_1)}=\cdots= \I^{(0)}= \I_{L/K}= \Gal(L/K);
\]
\[
\quad d(\phi) = t_2.
\]
The allowed depths are $3,5,7,9,\ldots$.
\end{proof}
 
Theorem \ref{ddd} contrasts with the case of $\SL_2(\Q_p)$ with $p>2$.  Here there is a unique biquadratic 
extension $L/K$, and the associated L-parameter $\phi : \Gal(L/K) \to \SO_3(\R)$ has depth zero.

\subsection{Principal series $L$-packets of cardinality two} \
\label{par:princ}

Recall from Subsection \ref{par:Lone} that a principal series L-parameter whose $L$-packet
is not a singleton has image $\{ 1, \matje{-1}{0}{0}{1} \}$ in the diagonal torus $T^\vee$ 
of $\PGL_2 (\C)$. Thus it comes from a character $\W_K \to \C^\times$ of order two. Define
\[
\W_K \times \SL_2(\C) \to K^\times
\]
 to be the projection $(g,M) \mapsto g$ followed by the Artin reciprocity map
 \[
 \mathbf{a}_K \colon\W_K \to K^\times.
 \]
 Let $E/K$ be a quadratic extension and let $\chi_E$ be the associated quadratic character of $K^\times$.
Consider the map
\[
K^\times \to \PGL_2(\C), \quad \quad
\alpha \mapsto \left(
\begin{array}{cc}
\chi_E(\alpha) & 0\\
0 & 1
\end{array}
\right)
\]
The composite map
\[
\phi_E\colon \W_K \times \SL_2(\C) \to K^\times \to \PGL_2(\C)
\]
is then an $L$-parameter attached to $\chi_E$.   For the centralizer of the image, we have
\[
Z_{\PGL_2(\C)}(\im \, \phi_E) = N_{\PGL_2 (\C)}(T^\vee) , \quad
S_\phi \cong \cS_\phi = \{1,w\} ,
\]
where $w$ generates the Weyl group of the dual group $\PGL_2(\C)$.   
As there are two characters $1, \epsilon$ of $W = \{1,w\}$, \eqref{eq:L1} says that the 
$L$-packet has cardinality two. There are two
enhanced parameters $(\phi_E, 1)$ and $(\phi_E, \epsilon)$, which parametrize the two elements 
in the $L$-packet $\Pi_{\phi_E} = \Pi_{\phi_E}(\SL_2 (K))$.   We will write
\begin{equation}\label{packet}
\Pi_{\phi_E} = \{\pi^1_E, \pi^2_E\}.
\end{equation}
If $\gamma \in K_s$ is a root of $X^2 - X - \beta \in K[X]$, the quadratic extension $K(\gamma)$ 
is denoted also by $K( \wp^{-1}(\beta) )$, with $\beta \in K$, where $\wp(X) = X^2 - X$.
So the quadratic character
\[
\chi_{n,j} = ( - , u_j\varpi^{-2n-1} +\wp(K) ]
\]
is associated with the quadratic extension  $E = K( \wp^{-1} ( u_j\varpi^{-2n-1} ) )$, see (\ref{ch}) in the Appendix.

Let $E/K$ be a quadratic extension. There are two kinds: the unramified one $E_0=K(\gamma_0)$ 
and countably many totally (and wildly) ramified $E =K(\gamma)$.
The unramified quadratic extension has a single ramification break for $t=-1$.

Let $E/K$ be a quadratic totally ramified extension.   
According to \cite[Proposition 11, p.411 and Proposition 14, p.413]{Da}, there is a single 
ramification break for $t = 2n+1$. Each value $2n+1$  occurs as
a break, with $n \geq 0,1,2,3,\ldots$.   By Theorem \ref{ddd}, adapted to the present case, we have
\[
d(\phi_E) = 2n+1.
\]
Fix a basis $\mathcal{B}=\{u_1, \ldots, u_f\}$ of $\F_q/\F_2$ and let $u_j\in\mathcal{B}$. 
The next result shows how to realise the extension $E/K$.

\begin{thm} 
If $E = K( \wp^{-1} ( u_j\varpi^{-2n-1} ) )$ then
\[
d(\phi_E) = 2n + 1
\]
 with $n = 0,1,2,3,4, \ldots$.
 \end{thm}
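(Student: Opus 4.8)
The plan is to reduce the statement to the location of a single ramification break. First I would recall that, by construction, $\phi_E$ is built from the quadratic character $\chi_E = \chi_{n,j}$ of $K^\times$, so that $\ker \phi_E$ corresponds under local class field theory to the quadratic extension $E = K(\wp^{-1}(u_j \varpi^{-2n-1}))$. Running the argument in the proof of Theorem~\ref{ddd}, adapted to the quadratic case — where the relative inertia subgroup of $\Gal(E/K)$ has order $2$ — one gets $d(\phi_E) = t$, with $t$ the highest break of the ramification filtration of $\Gal(E/K)$; since $[E:K] = 2$ the lower and upper numberings agree up to that jump, so it is immaterial which one is used. Everything therefore comes down to identifying this break.

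The key step is to put $\beta := u_j \varpi^{-2n-1}$ into reduced Artin--Schreier form. Its valuation $-(2n+1)$ is negative and odd, hence prime to the residue characteristic $p = 2$. On the other hand, for $y \in K$ with $\nu_K(y) < 0$ the term $y^2$ dominates $-y$ in $\wp(y) = y^2 - y$, so every element of $\wp(K)$ of negative valuation has even valuation. Hence $\beta \notin \wp(K)$ — so $E/K$ really is a quadratic extension, necessarily totally and wildly ramified — and the pole order of $\beta$ cannot be lowered inside its coset $\beta + \wp(K)$; that is, $\beta$ is already reduced. By the Artin--Schreier ramification analysis recorded in Appendix~\ref{par:ram} (compare \cite[Propositions 11 and 14]{Da}), the unique ramification break of such an extension occurs exactly at the pole order, namely $2n+1$.

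Combining the two steps yields $d(\phi_E) = 2n+1$, and this holds for every $n = 0,1,2,\dots$ since $n$ was arbitrary. The only genuinely substantive point is the ramification computation of the second paragraph; but because $2n+1$ is already coprime to $p$ the representative $\beta$ is reduced without any adjustment, so the break formula applies directly and what remains is routine bookkeeping with the definitions of depth and of the Artin reciprocity isomorphism. The main obstacle, such as it is, lies in matching conventions — lower versus upper numbering, and the normalization of $\mathbf{a}_K$ — carefully enough that the break is seen to be $2n+1$ on the nose rather than shifted by one.
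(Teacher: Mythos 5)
Your proof is correct, but it runs along a different track from the one the paper actually uses inside the proof of this theorem. You argue entirely on the Galois side: you verify that $\beta = u_j\varpi^{-2n-1}$ is a reduced Artin--Schreier representative (its pole order $2n+1$ is odd, while every element of $\wp(K)$ has either non-negative or even negative valuation, so the pole can neither be cancelled nor lowered within the coset $\beta+\wp(K)$), you invoke the break-at-the-pole-order fact for such extensions (\cite[Propositions 11 and 14]{Da}, equivalently the orthogonality relation \eqref{orthogonal} of Appendix \ref{par:ram}), and you then transport the break to the depth via the Theorem \ref{ddd}-type argument (compatibility of the upper-numbering filtration with the quotient $\Gal(K_\sep/K)\twoheadrightarrow\Gal(E/K)$), noting correctly that for a degree-$2$ extension with a single jump the upper and lower breaks coincide. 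The paper instead works on the character side: it first shows $d(\phi_E)=\ell(\chi_E)$ using the filtration compatibility $\mathbf{a}_K(\Gal(K_\sep/K)^l)=U_K^{\lceil l\rceil}$ from \cite[Theorem 3.6]{ABPS1}, and then computes $\ell(\chi_{n,j})=2n+1$ directly from the explicit formula for the pairing $d_\varpi$ (Theorem \ref{explicit}): the term $\Tr_{\F_q/\F_2}(u_j\theta_{2n+1})$ makes $\chi_{n,j}$ nontrivial on $U_K^{2n+1}$ while it is trivial on $U_K^{2n+2}$. Your route is shorter and more conceptual, at the cost of leaning on the cited ramification results of Dalawat (which the paper itself invokes in the paragraph preceding the theorem, so this is a legitimate dependence); the paper's route is more computational but self-contained given the Appendix, and it produces as a by-product the explicit character formula that is used elsewhere. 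Your reduction step showing $\beta$ is already reduced is exactly the hypothesis needed for the break formula, and your handling of the $+$-filtration in the depth definition (the infimum lands at the break itself) is sound, so no gap remains.
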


\begin{proof}    Let $\mathbf{a}_K : \W_K \to K^\times$ be the Artin reciprocity map.   
Then we have \cite[Theorem 3.6]{ABPS1}:
\[
\mathbf{a}_K(\Gal(K_\sep/K)^l) = U^{\lceil l \rceil}
\]
for all $l \geq 0$, where $\lceil l \rceil$ denotes the least integer greater than or equal to $l$, and $U^i_K$ is the $i$th higher unit group.

We are concerned here with the quadratic character $\chi = \chi_E$ and the associated $L$-parameter $\phi  = \phi_E$.
 The level $\ell(\chi)$ of $\chi$ is the least integer $n \geq 0$ for which $\chi(U_K^{n+1}) = 1$.    Call this integer $N$.    For this integer $N$, we have
\[
N < l \leq N+1 \implies \mathbf{a}_K(\Gal(K_\sep/K)^l) = U_K^{\lceil l \rceil} = U_K^{N+1} \: \textrm{on which} \, \chi \, \textrm{is trivial}
\]
\[
N-1 < l \leq N \implies \mathbf{a}_K(\Gal(K_\sep/K)^l) = U_K^{\lceil l \rceil} = U_K^N \: \textrm{on which} \, \chi \, \textrm{is nontrivial}
\]
The $L$-parameter $\phi$ will factor through $K^\times$ and
we have to consider its depth $d(\phi)$.
Recall: the depth of $\phi$  is the smallest number $d(\phi) \geq 0$ such that $\phi$  is trivial on $\Gal(K_\sep/K)^l$  for all $l > d(\phi)$.  Then $d(\phi) = N$ in view
of the above two implications.   We infer that
\begin{equation}\label{E}
\ell(\chi_E) = d(\phi_E).
\end{equation}

If $\chi$ is the unramified quadratic character given by $\chi(x) = (-1)^{\val_K(x)}$ 
then we will have to allow $N = -1$ in which case $\phi$ has negative depth.

If $E = K( \wp^{-1} ( u_j\varpi^{-2n-1} ) )$ then  $\chi_E = \chi_{n,j}$ and so we have
\begin{equation}\label{EE}
\ell(\chi_E) = \ell(\chi_{n,j}).
\end{equation}
We now compute the level of the quadratic character $\chi_{n,j}$ defined in (\ref{ch}).
Every $\alpha\in U_K^{i}$ has the form $\alpha=1+\varepsilon\varpi^i$, with 
$\varepsilon\in\mathfrak{o}$, and can be expanded in the convergent product
\[
\alpha = \prod\nolimits_{i\geq 1}(1+\theta_i\varpi^i)
\]
for unique $\theta_i\in\F_q$. As we can see in the proof of Theorem \ref{explicit},
$$\dv(1+\theta_{2n+1}\varpi^{2n+1}, u_j\varpi^{-2n-1})=\Tr_{\F_q/\F_2}(u_j\theta_{2n+1})$$
and
$$\dv(1+\theta_i\varpi^i, u_j\varpi^{-2n-1})=0$$
if $i\nmid 2n+1$.
There exists, therefore, an element $\alpha\in U_K^{2n+1}$ such that $\chi_{n,j}(\alpha)\neq 0$ 
and $\chi_{n,j}(U_K^{2n+2})=1$.
We infer that
\begin{equation}\label{EEE}
\ell(\chi_{n,j}) = 2n+1.
\end{equation}
The theorem now follows from (\ref{E}), (\ref{EE}) and (\ref{EEE}).
\end{proof}

We conclude that, if $E = K( \wp^{-1} ( u_j\varpi^{-2n-1} ) )$ , then
\[
d(\pi^i_E) \geq 2n+1
\]
with $i = 1,2$.

It follows that the depths of the irreducible representations $\pi^1_E, \pi^2_E$ 
in the $L$-packet $\Pi_{\phi_E}$ can be arbitrarily large.
For representations  of enormous depth, such as the ones encountered in this article, 
the term \emph{hadopelagic} commends itself, in contrast
to the currently accepted term \emph{epipelagic} for representations of modest depth, 
see \url{en.wikipedia.org/wiki/Epipelagic}.

\appendix

\section{Artin-Schreier symbol}  
Let $K$ be a local field of characteristic $p$ with finite
residue field $k$. The field of constants $k=\F_q$ is a finite extension of $\F_p$, with degree $[k:\F_p]=f$ and $q=p^f$.
Let $\mathfrak{o}$ be the ring of integers in $K$ and $\mathfrak{p}\subset\mathfrak{o}$ the maximal ideal. A choice of uniformizer $\varpi\in\mathfrak{o}$ determines isomorphisms $K\cong\F_q((\varpi))$, $\mathfrak{o}\cong\F_q[[\varpi]]$ and $\mathfrak{p}=\varpi\mathfrak{o}\cong\varpi\F_q[[\varpi]]$.
The group of units is denoted by $\mathfrak{o}^{\times}$ and $\nu$ represents a normalized valuation on $K$, so that $\nu(\varpi)=1$ and $\nu(K)=\Z$.

 Following \cite[IV.4 - IV.5]{FV}, we have the reciprocity map
 \[
 \Psi_K : K^\times \to \Gal(K_{\ab}/K)
 \]
We define the map (Artin-Schreier symbol)
\[
(-,-] : K^\times \times K \to \F_p
\]
by the formula
\[
(\alpha,\beta] = \Psi_K(\alpha)(\gamma) - \gamma
\]
where $\gamma$ is a root of the polynomial $X^p - X - \beta$.   The polynomial $X^p - X$ is denoted $\wp(X)$.   According to
\cite[p.148]{FV} the pairing $(-,-]$ determines the nondegenerate pairing
\begin{equation}\label{AS}
K^\times / K^{\times p} \times K/\wp(K) \to \F_p.
\end{equation}
Let us fix a coset $\beta + \wp(K) \in K/\wp(K)$.   According to (\ref{AS}), this coset determines an element of $\Hom(K^\times / K^{\times p}, \F_p)$.

Now specialise to $p = 2$.   We will identify the additive group $\F_2$ with the multiplicative group $\mu_2(\C) = \{1, -1\} \subset \C$.  In that case, the elements of
$\Hom(K^\times/K^{\times 2}, \F_2)$ are precisely the quadratic characters of $K^\times$.   Since the pairing (\ref{AS}) is nondegenerate, the quadratic characters are parametrised
by the cosets $\beta + \wp(K) \in K/\wp(K)$.   Now the index of $\wp(K)$ in $K$ is infinite; in fact, the powers $\{\varpi^{-2n -1} : n \geq 0\}$
are distinct coset representatives, see \cite[p.146]{FV}. 

\begin{lem}\label{quadratic} 
For $K=\F_2((\varpi))$ the set of powers $\{\varpi^{-2n -1} : n \geq 0\}$ is a complete set of coset representatives. 
\end{lem}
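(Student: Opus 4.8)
The plan is to prove the precise statement that every $\beta \in K = \F_2((\varpi))$ is congruent modulo $\wp(K)$ to a unique element $\epsilon + \sum_{n \in S}\varpi^{-2n-1}$ with $\epsilon \in \F_2$ and $S \subseteq \Z_{\geq 0}$ finite; equivalently, that $\{1\} \cup \{\varpi^{-2n-1} : n \geq 0\}$ is an $\F_2$-basis of the $\F_2$-vector space $K/\wp(K)$, so that in particular the powers $\varpi^{-2n-1}$ lie in pairwise distinct nontrivial cosets (the fact used in the preceding proof, where $\ell(\chi_{n,j}) = 2n+1$). The only identities needed are the Artin--Schreier relation $\wp(x) = x^2 + x$ and, for $\lambda \in \F_2$ and $i \in \Z$, the formula $\wp(\lambda \varpi^i) = \lambda^2 \varpi^{2i} + \lambda \varpi^i = \lambda(\varpi^{2i} + \varpi^i)$, which holds because $\lambda^2 = \lambda$.

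First I would show that every coset has such a representative. For $i \geq 1$ the series $\sum_{k \geq 0}\varpi^{2^k i}$ converges in $K$ and satisfies $\wp\big(\sum_{k \geq 0}\varpi^{2^k i}\big) = \varpi^i$ (telescoping in characteristic $2$), and more generally one checks $\mathfrak p \subseteq \wp(K)$ by solving $\wp(y) = z$ coefficient by coefficient for $z \in \mathfrak p$. Hence modulo $\wp(K)$ one may discard the part of $\beta$ of positive valuation and reduce its constant term to an element of $\F_2$. For the principal part $\sum_{i = 1}^{N} a_i \varpi^{-i}$ with $a_i \in \F_2$, I would use $\varpi^{-2m} \equiv \varpi^{-m} \pmod{\wp(K)}$ — the difference being $\wp(\varpi^{-m})$ — to replace, for each $i = 2^k m$ with $m$ odd, the term $\varpi^{-i}$ by $\varpi^{-m}$, and then collect like terms over $\F_2$. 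Since the positive odd integers are exactly the $2n+1$ with $n \geq 0$, this puts $\beta$ into the claimed form.

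For uniqueness, the key point is a parity obstruction: $\wp(K)$ contains no Laurent series of odd negative valuation, and more precisely $\wp(K) \cap \mathfrak o = \mathfrak p$. Indeed, if $\nu(x) < 0$ then $\nu(\wp(x)) = \nu(x^2 + x) = 2\nu(x)$ is even and negative; and if $\nu(x) \geq 0$, writing $\lambda \in \F_2$ for the reduction of $x$ modulo $\mathfrak p$ one has $\wp(\lambda) = 0$, so $\wp(x) = \wp(x + \lambda)$ with $x + \lambda \in \mathfrak p$, whence $\nu(\wp(x)) \geq 1$. Now if $\epsilon + \sum_{n \in S}\varpi^{-2n-1}$ and $\epsilon' + \sum_{n \in S'}\varpi^{-2n-1}$ are congruent, their difference $(\epsilon + \epsilon') + \sum_{n \in S \triangle S'}\varpi^{-2n-1}$ lies in $\wp(K)$; if $S \triangle S'$ were nonempty this difference would have valuation $-2\max(S \triangle S') - 1$, odd and negative, a contradiction, so $S = S'$, and then $\epsilon + \epsilon' \in \wp(K) \cap \mathfrak o = \mathfrak p$ forces $\epsilon = \epsilon'$. (Alternatively, to get only the distinctness of the powers one may invoke the level computation of the preceding proof: the character $(-, \varpi^{-2n-1}]$ has level $2n+1$, and characters of different level are distinct.)

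The point requiring the most care is the treatment of the constant term: an element of $\F_2$ genuinely survives the reduction — it records the unramified quadratic extension $\F_4/\F_2 = K(\wp^{-1}(1))$ — so ``complete set of coset representatives'' must be understood as comprising $0$ and $1$ together with the powers $\varpi^{-2n-1}$, i.e.\ as the basis statement above; and it is exactly the parity obstruction $\wp(K) \cap \mathfrak o = \mathfrak p$, together with its negative-valuation analogue, that both prevents any collapsing among the normal forms and pins down the representative. The remaining steps are routine manipulations of convergent series in $\F_2((\varpi))$.
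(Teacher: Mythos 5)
Your argument is correct, and it supplies something the paper itself does not: the paper states this lemma without proof, deferring to \cite[p.~146]{FV} and to \cite{Da}, whereas you give a self-contained normal-form argument. Your three ingredients -- $\mathfrak{p}\subseteq\wp(K)$ via the convergent series $\sum_{k\geq 0}\varpi^{2^k i}$, the reduction $\varpi^{-2m}\equiv\varpi^{-m}\pmod{\wp(K)}$, and the parity obstruction (negative-valuation elements of $\wp(K)$ have even valuation, and $\wp(K)\cap\mathfrak{o}=\mathfrak{p}$) -- are all verified correctly, and they give both existence and uniqueness of the normal form $\epsilon+\sum_{n\in S}\varpi^{-2n-1}$. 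Your caveat about the constant term is also well taken and is a genuine precision of the statement: the class of $1$, which cuts out the unramified quadratic extension, is \emph{not} in the $\F_2$-span of the classes $\varpi^{-2n-1}+\wp(K)$ (any such relation would produce an element of $\wp(K)$ of odd negative valuation, or force $1\in\wp(K)$), so the lemma as literally phrased must be read as you do, with $0$ and $1$ adjoined; this is consistent with the paper's own Appendix, where the filtration of $K/\wp(K)$ has $\dim_{\F_2}V_n=1+\lceil n/2\rceil f$ and the extra $1$ is exactly the line $V_0$ giving the unramified extension, and with the fact that only the distinctness and nontriviality of the classes $\varpi^{-2n-1}+\wp(K)$ (equivalently the level computation $\ell(\chi_{n,j})=2n+1$) is used in the body of the paper, the unramified quadratic extension being treated separately there. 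In short: the paper's route is citation of the duality/ramification results of Fesenko--Vostokov and Dalawat, which buys the general residue field case $\F_q$ at once; your route is an elementary Artin--Schreier computation special to the situation at hand, which buys an explicit and verifiable basis statement (and exposes the small imprecision in the phrasing of the lemma).
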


That is not the case when $K=\F_q((\varpi))$ has residue degree $f>1$. 
Let $\mathcal{B}=\{u_1, \ldots, u_f\}$  denote a basis of the $\F_2$-linear space $\F_q$. Then,
$$\{u_j\varpi^{-2n -1} : n \geq 0, j=1, \ldots, f\}$$
is a complete set of coset representatives of $K/\wp(K)$, see $\S 5$ and $\S6$ of \cite{Da}.

The pairing (\ref{AS}) creates a sequence of quadratic characters
\begin{equation}\label{ch}
\chi_{n,j}(\alpha) : = (\alpha, u_j\varpi^{-2n -1} + \wp(K)]
\end{equation}
with $n \geq 0$ and $j=1, \ldots, f$.

\subsection{Explicit formula for the Artin-Schreier symbol} \

In \cite[Corollary 5.5, p.148]{FV}, the authors introduce the map $d_{\varpi}$ which we now describe.
Let $\varpi$ be a fixed uniformizer. Using the isomorphism $K = \F_q((\varpi))$, 
where $q=2^f$, every element $\alpha\in K$ can be uniquely expanded as

\begin{equation}\label{expansion}
\alpha=\sum_{i\geq i_a}\vartheta_i\varpi^i \, , \,\, \vartheta_i \in\F_q.
\end{equation}
Put
\[
\frac{d\alpha}{d\varpi}=\sum_{i\geq i_a}i\vartheta_i\varpi^i \, , \,\, \res_{\varpi}(\alpha)=\vartheta_{-1}.
\]
Define the pairing
\begin{equation}\label{pairing dv}
\dv:K^{\times}\times K\rightarrow\F_2 \, , \,\, \dv(\alpha,\beta)=\Tr_{\F_q/\F_2}\res_{\varpi}(\beta\alpha^{-1}\frac{d\alpha}{d\varpi})
\end{equation}
By \cite[Theorem 5.6. p.149]{FV}, the pairing $( - , - ]$ coincides with the pairing defined in (\ref{pairing dv}). In particular, $\dv$ does not depend on the choice of uniformizer.

We conclude that every quadratic character $\chi_{n,j}$ from (\ref{ch}) is completely described by
\begin{equation}\label{res 1}
\dv(\alpha,u_j\varpi^{-2n-1})=\Tr_{\F_q/\F_2}\res_{\varpi}(u_j\varpi^{-2n-1}\alpha^{-1}\frac{d\alpha}{d\varpi}) \, , \,\, n\geq 0.
\end{equation}
We seek a formula more explicit than (\ref{res 1}).

\medskip

By \cite[Proposition 5.10, p. 17]{FV}, for every $\alpha\in K^{\times}$ 
there exist uniquely determined $k\in\Z$ and $\theta_i\in\F_q$ for $i\geq 0$ 
such that $\alpha$ can be expanded in the convergent product
\begin{equation}\label{product expansion}
\alpha=\varpi^k \theta_0 \prod_{i\geq 1}(1+\theta_i\varpi^i)
\end{equation}
We have
\begin{multline*}
\dv(\varpi^k  \theta_0 \prod_{i\geq 1}(1+\theta_i\varpi^i), u_j\varpi^{-2n-1}) =\\
\dv(\theta_0\varpi^k,u_j\varpi^{-2n-1})+\dv(\prod_{i\geq 1}(1+\theta_i\varpi^i), u_j\varpi^{-2n-1})
\end{multline*}
Now, $\dv(\theta_0\varpi^k,u_j\varpi^{-2n-1})$ is easy to compute:
\[
\begin{aligned}
\dv(\theta_0\varpi^k,u_j\varpi^{-2n-1})  
& =\Tr_{\F_q/\F_2}\res_{\varpi}(u_j\varpi^{-2n-1}\theta_0^{-1}\varpi^{-k}\frac{d(\theta_0\varpi^k)}{d\varpi}) \\
& =\Tr_{\F_q/\F_2}\res_{\varpi}(ku_j\varpi^{-2n-2}) \\
& = 0 .
\end{aligned}
\]
On the other hand,
\[
\begin{aligned}
\dv(\prod_{i\geq 1}(1+\theta_i\varpi^i), u_j\varpi^{-2n-1}) 
& =\sum_{i\geq 1}\dv(1+\theta_i\varpi^i,u_j\varpi^{-2n-1}) \\
& = \sum_{i=1}^{2n+1}\dv(1+\theta_i\varpi^i,u_j\varpi^{-2n-1})
\end{aligned}
\]
since $\dv(1+\theta_i\varpi^i, u_j\varpi^{-2n-1})=0$ if $i>2n+1$ (see \cite[p. 150]{FV}, proof of Corollary).
Moreover, by the same proof of Corollary in \cite[p. 150]{FV}, we have
\begin{equation}\label{2n+1}
\begin{aligned}
\dv(1+\theta_{2n+1}\varpi^{2n+1}, u_j\varpi^{-2n-1}) & =\Tr_{\F_q/\F_2}((2n+1)u_j\theta_{2n+1}) \\
& =\Tr_{\F_q/\F_2}(u_j\theta_{2n+1}) .
\end{aligned}
\end{equation}
This last formula is a particular case of a more general formula we are about to prove.

In order to compute $\dv(1+\theta_i\varpi^i, u_j\varpi^{-2n-1})$ for $i=1$, $\ldots$, $2n+1$, we need to find the Laurent series expansion of $(1+\theta_i\varpi^i)^{-1}$. This can be done by expanding the geometric series
\[
(1+\theta_i\varpi^i)^{-1}=\sum_{j\geq 0}(-\theta_i\varpi^i)^j=
1-\theta_i\varpi{i}+\theta_i^2\varpi{2i}-\theta_i^3\varpi{3i}+\cdots
\]
We have
\begin{multline*}
u_j\varpi^{-2n-1}(1+\theta_i\varpi^i)^{-1}\frac{d}{d\varpi}(1+\theta_i\varpi^i)= \\
iu_j\theta_i\varpi^{-2n-1+i-1}(1-\theta_i\varpi^i+\theta_i^2\varpi^{2i}-\theta_i^3\varpi^{3i}+\cdots+(-1)^r\theta_i^r\varpi^{ri}+\cdots)$$
\end{multline*}
The residue will be nonzero if
\[
-2n-1+i-1+ri=-1\Leftrightarrow r=\frac{2n+1}{i}-1
\]
Hence, $\dv(1+\theta_i\varpi^i, u_j\varpi^{-2n-1})=0$ if $i\nmid 2n+1$. In particular, $i$ must be odd.

We have:
\[
\dv(1+\theta_i\varpi^i, u_j\varpi^{-2n-1})=\left\{\begin{array}{rrr} 0 & , &\textrm{ if } i\nmid 2n+1 \\
\Tr_{\F_q/\F_2}(u_j\theta_i^{(2n+1)/i}) & , &\textrm{ if } i|2n+1   \end{array}\right.
\]
In particular, we recover formula (\ref{2n+1}) by taking $i=2n+1$.

\medskip

From the above, we have established the following explicit formula.

\begin{thm}\label{explicit}  Let $K$ be a local function field of characteristic $2$ with residue degree $f$, and let  $\chi_{n,j}$ denote the quadratic character from (\ref{ch}).  Then we have the explicit formula
\[
\chi_{n,j}(\alpha) =\sum_{i|2n+1}\Tr_{\F_q/\F_2}(u_j\theta_i^{(2n+1)/i})
\]
where $\alpha=\varpi^k\theta_0\prod_{i\geq 1}(1+\theta_i\varpi^i)\in K^{\times}$, $n \geq 0$ and $j=1, \ldots, f$.
\end{thm}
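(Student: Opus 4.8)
The plan is to prove Theorem~\ref{explicit} by assembling the pieces already established in this subsection. The target formula
\[
\chi_{n,j}(\alpha) = \sum_{i \mid 2n+1} \Tr_{\F_q/\F_2}\big(u_j \theta_i^{(2n+1)/i}\big)
\]
for $\alpha = \varpi^k \theta_0 \prod_{i \geq 1}(1+\theta_i\varpi^i)$ is precisely a bookkeeping statement about the pairing $\dv(-, u_j\varpi^{-2n-1})$, so the strategy is: (i) use bilinearity of $\dv$ in its first argument with respect to the convergent product expansion \eqref{product expansion}; (ii) dispose of the leading factor $\theta_0\varpi^k$; (iii) evaluate each one-unit factor $1+\theta_i\varpi^i$; and (iv) observe that only the terms with $i \mid 2n+1$ survive, and that for those the contribution is exactly $\Tr_{\F_q/\F_2}(u_j\theta_i^{(2n+1)/i})$.

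First I would invoke the identification of $(-,-]$ with $\dv$ from \cite[Theorem 5.6, p.149]{FV}, so that $\chi_{n,j}(\alpha) = \dv(\alpha, u_j\varpi^{-2n-1})$, and then use that $\dv(\cdot,\beta)$ is additive (i.e.\ multiplicative-to-additive) in the first variable — this is clear from the defining formula \eqref{pairing dv}, since $\alpha \mapsto \alpha^{-1}\,d\alpha/d\varpi$ is a logarithmic derivative and converts products to sums. Applying this to \eqref{product expansion} splits $\chi_{n,j}(\alpha)$ into $\dv(\theta_0\varpi^k, u_j\varpi^{-2n-1}) + \sum_{i\geq 1}\dv(1+\theta_i\varpi^i, u_j\varpi^{-2n-1})$. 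The first term is computed directly: the logarithmic derivative of $\theta_0\varpi^k$ is $k/\varpi$, so the residue of $k u_j \varpi^{-2n-2}$ is zero because $-2n-2 \neq -1$. For the general one-unit factor, I would expand $(1+\theta_i\varpi^i)^{-1}$ as a geometric series and $\tfrac{d}{d\varpi}(1+\theta_i\varpi^i) = i\theta_i\varpi^{i-1}$, multiply by $u_j\varpi^{-2n-1}$, and read off when the exponent hits $-1$: this forces $ri = 2n+1$ for some $r \geq 0$, i.e.\ $i \mid 2n+1$, in which case the residue coefficient is $i\theta_i \cdot (-1)^{(2n+1)/i - 1}\theta_i^{(2n+1)/i - 1} u_j = i\,u_j\,\theta_i^{(2n+1)/i}$ (up to sign, irrelevant in characteristic $2$), and since $2n+1$ is odd every such $i$ is odd so $i \equiv 1 \pmod 2$ and the factor $i$ disappears mod $2$; otherwise the residue is zero.

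Collecting these contributions gives $\chi_{n,j}(\alpha) = \sum_{i \mid 2n+1}\Tr_{\F_q/\F_2}(u_j\theta_i^{(2n+1)/i})$, which is the asserted formula; the special case $i = 2n+1$ recovers \eqref{2n+1}, and the truncation $i \leq 2n+1$ is automatic since larger $i$ cannot divide $2n+1$. I do not expect a genuine obstacle here — the theorem is essentially a clean repackaging of the Schmid–Witt-type residue computation carried out just above — so the only point requiring a little care is bookkeeping the sign and the factor of $i$ modulo $2$, and checking that the geometric-series manipulation is legitimate (it is, since we work with formal Laurent series and each $1+\theta_i\varpi^i$ is a one-unit). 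One should also note that the finitely-many-terms issue for the convergent product causes no trouble: for fixed $2n+1$ only the finitely many indices $i \mid 2n+1$ can contribute, so the infinite sum over $i$ is in fact finite.
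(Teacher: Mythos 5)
Your proposal is correct and follows essentially the same route as the paper: identify $\chi_{n,j}$ with $\dv(\cdot,u_j\varpi^{-2n-1})$, split along the product expansion \eqref{product expansion} using the logarithmic-derivative additivity, kill the $\theta_0\varpi^k$ term by a residue count, and evaluate each one-unit factor via the geometric series, finding a nonzero residue exactly when $i\mid 2n+1$ with contribution $\Tr_{\F_q/\F_2}(u_j\theta_i^{(2n+1)/i})$ (the factor $i$ being $1$ mod $2$ since $i$ is odd). Your added remarks on the sign in characteristic $2$ and on the finiteness of the contributing terms are exactly the bookkeeping points the paper handles implicitly.
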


For example, we have
\[
\chi_{0,1}(\alpha) = \Tr_{\F_q/\F_2}\, \theta_1, \quad \chi_{1,1}(\alpha) = \Tr_{\F_q/\F_2} (\theta^3_1 + \theta_3), \quad \chi_{2,1}(\alpha) = \Tr_{\F_q/\F_2} (\theta^5_1 + \theta_5),
\]
where $\{1, u_2, \ldots, u_f\}$ is a basis of $\F_q/\F_2$.

\subsection{Ramification} \
\label{par:ram}

Quadratic extensions $L/K$ are obtained by adjoining an $\mathbb{F}_2$-line $D\subset K/\wp(K)$. 
Therefore, $L=K(\wp^{-1}(D))=K(\gamma)$ where $D= \span\{\beta+\wp(K)\}$, 
with $\gamma^2-\gamma=\beta$. In particular, if $\beta_0\in\mathfrak{o}\backslash\mathfrak{p}$ 
such that the image of $\beta_0$ in $\mathfrak{o}/\mathfrak{p}$ has nonzero trace in $\F_2$, 
the $\mathbb{F}_2$-line $V_0= \span\{\beta_0+\wp(K)\}$ contains all the cosets $\beta_i+\wp(K)$ 
where $\beta_i$ is an integer and so $K(\wp^{-1}(\mathfrak{o}))=K(\wp^{-1}(V_0))=K(\gamma_0)$ 
where $\gamma_0^2-\gamma_0=\beta_0$ gives the unramified quadratic extension, see \cite[Proposition $12$, p. 412]{Da}.

Biquadratic extensions are computed the same way, by considering $\mathbb{F}_2$-planes 
$W= \span\{\beta_1+\wp(K), \beta_2+\wp(K)\}\subset K/\wp(K)$. Therefore, 
if $\beta_1+\wp(K)$ and $\beta_2+\wp(K)$ are $\mathbb{F}_2$-linearly independent then 
$K(\wp^{-1}(W)):=K(\gamma_1, \gamma_2)$ is biquadratic, where $\gamma_1^2-\gamma_1=\beta_1$ 
and $\gamma_2^2-\gamma_2=\beta_2$, $\gamma_1,\gamma_2\in K^s$. Therefore, $K(\gamma_1, \gamma_2)/K$ 
is biquadratic if $\beta_2-\beta_1\not\in\wp(K)$.

A biquadratic extension containing the line $V_0$ is of the form $K(\gamma_0,\gamma)/K$. 
There are countably many quadratic extensions $L_0/K$ containing the unramified quadratic 
extension. They have ramification index $e(L_0/K)=2$. And there are countably many biquadratic 
extensions $L/K$ which do not contain the unramified quadratic extension. They have ramification index $e(L/K)=4$.

So, there is a plentiful supply of biquadratic extensions $K(\gamma_1, \gamma_2)/K$.

The space $K/\wp(K)$ comes with a filtration
\begin{equation}\label{Filtration K/P(K)}
0\subset_1 V_0\subset_f V_1=V_2\subset_f V_3=V_4\subset_f \cdots\subset K/\wp(K)
\end{equation}
where $V_0$ is the image of $\mathfrak{o}_K$  and $V_i$ ($i>0$) is the image of $\mathfrak{p}^{-i}$ under the canonical surjection $K\rightarrow K/\wp(K)$. For $K=\mathbb{F}_q((\varpi))$ and $i>0$, each inclusion $V_{2i}\subset_f V_{2i+1}$ is a sub-$\mathbb{F}_2$-space of codimension $f$. The $\F_2$-dimension of $V_n$ is
\begin{equation}\label{F_2 dim.}
\dim_{\F_2}V_n=1+\lceil n/2 \rceil f,
\end{equation}
for every $n\in\mathbb{N}$, where $\lceil x \rceil$ is the smallest integer bigger than $x$.

\bigskip

Let $L/K$ denote a Galois extension with Galois group $G$. For each $i\geq -1$ we define the $i^{th}$-ramification subgroup of $G$ (in the lower numbering) to be:
$$G_i=\{\sigma\in G: \sigma(x)-x \in\mathfrak{p}_L^{i+1}, \forall x\in\mathfrak{o}_L\}.$$
An integer $t$ is a \emph{break} for the filtration $\{G_i\}_{i\geq -1}$ if $G_t\neq G_{t+1}$. The study of ramification groups $\{G_i\}_{i\geq -1}$ is equivalent to the study of breaks of the filtration.

There is another decreasing filtration with upper numbering $\{G^i\}_{i\geq -1}$ and defined by the \emph{Hasse-Herbrand function} $\psi=\psi_{L/K}$:
$$G^u=G_{\psi(u)}.$$
In particular, $G^{-1}=G_{-1}=G$ and $G^0=G_0$, since $\psi(-1)=-1$ and $\psi(0)=0$.

Now, in analogy with the lower notation, a real number $t\geq -1$ is a \emph{break} for the filtration $\{G^i\}_{i\geq -1}$ if
\begin{equation}\label{break upper}
\forall \varepsilon> 0, \, G^t\neq G^{t+\varepsilon}.
\end{equation}
We define
\begin{equation}\label{eqn: Gt+}
G^{t+} := \bigcap_{r > t} G^{r}.
\end{equation}
Then $t$ is a break of the filtration if and only if $G^{t+}\ne G^t$. The set of breaks of the filtration is
countably infinite and need not consist of integers.

If $G$ is abelian, it follows from Hasse-Arf theorem \cite[p.91]{FV} that the breaks are integers and (\ref{break upper}) is equivalent to
\[
G^t\neq G^{t+1}.
\]
Let $G_2=\Gal(K_2/K)$ be the Galois group of the maximal abelian extension of exponent $2$, $K_2=K(\wp^{-1}(K))$. Since $G_2\cong K^{\times}/K^{\times 2}$ , the
nondegenerate pairing  (\ref{AS}) coincides with the pairing $G_2\times K/\wp(K)\rightarrow \Z/2\Z$.

The profinite group $G_2$ comes equipped with a ramification filtration $(G_2^u)_{u\geq -1}$ in the upper numbering, see \cite[p.409]{Da}. For $u\geq 0$, we have an orthogonal relation \cite[Proposition 17, p.415]{Da}
\begin{equation}\label{orthogonal}
(G_2^u)^{\bot}=\overline{\mathfrak{p}^{-\lceil u \rceil+1}}=V_{\lceil u \rceil-1}
\end{equation}
under the pairing $G_2\times K/\wp(K)\rightarrow \Z/2\Z$.

\medskip
Since the upper filtration is more suitable for quotients, we will compute the upper breaks. By using the Hasse-Herbrand function it is then possible to compute the lower breaks in order to obtain the lower ramification filtration.

According to \cite[Proposition 17]{Da}, the positive breaks in the filtration $(G^v)_v$ occur precisely at integers prime to $p$. So, for $ch(K)=2$, the positive breaks will occur at odd integers. The lower numbering breaks are also integers. If $G$ is cyclic of prime order, then there is a unique break for any decreasing filtration $(G^v)_v$ (see \cite{Da}, Proposition $14$). In general, the number of breaks depends on the possible filtration of the Galois group.

Given a plane $W\subset K/\wp(K)$, the filtration (\ref{Filtration K/P(K)}) $(V_i)_i$ on $K/\wp(K)$ induces a filtration $(W_i)_i$ on $W$, where $W_i=W\cap V_i$. There are three possibilities for the filtration breaks on a plane and we will consider each case individually.

\medskip

\textbf{Case 1:} $W$ contains the line $V_0$, i.e. $L_0=K(\wp^{-1}(W))$ contains the unramified quadratic extension $K(\wp^{-1}(V_0))=K(\alpha_0)$ of $K$. The extension has residue degree $f(L_0/K)=2$ and ramification index $e(L_0/K)=2$. In this case, there is an integer $t>0$, necessarily odd, such that the filtration $(W_i)_i$ looks like
$$0\subset_1 W_0=W_{t-1}\subset_1 W_{t}=W.$$

By the orthogonality relation (\ref{orthogonal}), the upper ramification filtration on $G=\Gal(L_0/K)$ looks like
$$\{1\}=\cdots=G^{t+1}\subset_1G^{t}=\cdots=G^0\subset_1G^{-1}=G$$
Therefore, the upper ramification breaks occur at $-1$ and $t$.


\medskip

The number of such $W$ is equal to the number of planes in $V_t$ containing the line $V_0$ but not contained in the subspace $V_{t-1}$. This number can be computed and equals the number of biquadratic extensions of $K$ containing the unramified quadratic extensions and with a pair of upper ramification breaks $(-1,t)$, $t>0$ and odd. Here is an example.

\begin{ex}
The number of biquadratic extensions containing the unramified quadratic extension and with a pair of upper ramification breaks $(-1,1)$ is equal to the number of planes in an $1+f$-dimensional $\F_2$-space, containing the line $V_0$. There are precisely
$$1+2+2^2+\cdots+2^{f-1}=\frac{1-2^f}{1-2}=q-1$$
of such biquadratic extensions.
\end{ex}

\textbf{Case 2.1:} $W$ does not contains the line $V_0$ and the induced filtration on the plane $W$ looks like
$$0=W_{t-1}\subset_2 W_{t}=W$$
for some integer $t$, necessarily odd.

The number of such $W$ is equal to the  number of planes in $V_t$ whose intersection with $V_{t-1}$ is $\{0\}$. Note that, there are no such planes when $f=1$. So, for $K=\F_2((\varpi))$, \textbf{case 2.1} does not occur.

Suppose $f>1$. By the orthogonality relation, the upper ramification ramification filtration on $G=\Gal(L/K)$ looks like
$$\{1\}=\cdots=G^{t+1}\subset_2G^{t}=\cdots=G^{-1}=G$$
Therefore, there is a single upper ramification break occurring at $t>0$ and is necessarily odd.


\medskip

For $f=1$ there is no such biquadratic extension. For $f>1$, the number of these biquadratic extensions  equals the number of planes $W$ contained in an $\F_2$-space of dimension $1+fi$, $t=2i-1$, which are transverse to a given codimension-$f$ $\F_2$-space.

\medskip

\textbf{Case 2.2:} $W$ does not contains the line $V_0$ and the induced filtration on the plane $W$ looks like
$$0=W_{t_1-1}\subset_1 W_{t_1}=W_{t_2-1}\subset_1 W_{t_2}=W$$
for some integers $t_1$ and $t_2$, necessarily odd, with $0<t_1<t_2$.

The orthogonality relation for this case implies that the upper ramification filtration on $G=\Gal(L/K)$ looks like
$$\{1\}=\cdots=G^{t_2+1}\subset_1G^{t_2}=\cdots=G^{t_1+1}\subset_1G^{t_1}=\cdots=G$$
The upper ramification breaks occur at odd integers $t_1$ and $t_2$.

There is only a finite number of such biquadratic extensions, for a given pair of upper breaks $(t_1,t_2)$.




\end{document}